\documentclass[12pt,centertags,oneside]{amsart}
\usepackage{amsmath,amstext,amsthm,amscd,typearea}
\usepackage{amssymb}
\usepackage{amsfonts}
\usepackage{soul,cancel}
\setstcolor{red}
\usepackage[backref=page]{hyperref}

\usepackage{tikz}\usetikzlibrary{decorations.markings,matrix,arrows}
\usepackage{tikz-cd}

\usepackage{amscd,amsxtra,calc,mathtools}
\usepackage{cmmib57}
\usepackage{url}
\usepackage{amscd}
\usepackage{color}
\usepackage[normalem]{ulem}
\setcounter{MaxMatrixCols}{25}
\usepackage{braket}

\usepackage[a4paper,width=16.2cm,top=3cm,bottom=3cm]{geometry}

\numberwithin{equation}{section}
\setlength{\parskip}{0.25em}

\setcounter{MaxMatrixCols}{25}

\theoremstyle{plain}
\newtheorem{thm}{Theorem}[section]
\newtheorem{theorem}[thm]{Theorem}

\newtheorem{lemma}[thm]{Lemma}
\newtheorem{lem}[thm]{Lemma}

\newtheorem{proposition}[thm]{Proposition}

\theoremstyle{definition}
\newtheorem{remark}[thm]{Remark}

\newtheorem{claim}[thm]{Claim}

\newtheorem{s:examples}[thm]{s:examples}
\newtheorem{conjecture}[thm]{Conjecture}

\newtheorem{question}[thm]{Question}

\numberwithin{equation}{section}


\newcommand{\ga}[2]{\begin{gather}\label{#1}#2 \end{gather}}

\newcommand{\ess}{{\rm ess}}




\newcommand{\C}{{\mathbb C}}

\newcommand{\R}{{\mathbb R}}

\newcommand{\Z}{{\mathbb Z}}

\newcommand{\Vect}{\text{\sf Vect}}

\newcommand{\Aut}{{\rm Aut\hspace{.1ex}}}

\newcommand{\Nef}{{\rm Nef\hspace{.1ex}}}

\newcommand{\GL}{{\rm GL\hspace{.1ex}}}

\newcommand{\Ker}{{\rm Ker\hspace{.1ex}}}

\newcommand{\Kc}{\mathcal{K}}
\newcommand{\Nc}{\mathcal{N}}




\newcommand{\ssec}{\subsection}

\newcommand{\sssec}{\subsubsection}


\newcommand{\ol}{\overline}

\newcommand{\vast}{\bBigg@{4}}
\newcommand{\Vast}{\bBigg@{5}}




\newcommand{\cK}{\mathcal{K}}

\newcommand{\cN}{\mathcal{N}}







\newcommand{\gL}{\Lambda}


\renewcommand{\ga}{\alpha}
\newcommand{\gb}{\beta}

\newcommand{\gk}{\kappa}

\newcommand{\go}{\omega}








\newcommand{\Id}{\mathrm{Id}}

\newcommand{\torsion}{\mathrm{torsion}}




\newcommand{\bss}{\backslash}

\newcommand{\cnec}{\mathrel{:=}}
\newcommand{\cupp}{\mathbin{\smile}}



\renewcommand{\(}{\left(}
\renewcommand{\)}{\right)}



\newcommand{\cto}{\circlearrowleft}

\newcommand{\hto}{\hookrightarrow}

\title[Automorphisms of compact K\"ahler manifolds]
{Zero entropy automorphisms of compact K\"ahler manifolds and dynamical filtrations}

\dedicatory{In memory of Jean-Pierre Demailly}

\author{Tien-Cuong Dinh}
\address{Department of Mathematics, National University
of Singapore,
Singapore}
\email{matdtc@nus.edu.sg}

\author{Hsueh-Yung Lin}
\address{Department of Mathematics, National Taiwan University,
Taipei, Taiwan}
\email{hsuehyunglin@ntu.edu.tw}

\author{Keiji Oguiso}
\address{
Graduate School of Mathematical Sciences, the University of Tokyo,
Japan; National Center for Theoretical Sciences, 
National Taiwan University,
Taipei, Taiwan
}
\email{oguiso@g.ecc.u-tokyo.ac.jp}

\author{De-Qi Zhang}
\address{Department of Mathematics, National University
of Singapore,
Singapore}
\email{matzdq@nus.edu.sg}

\setcounter{tocdepth}{1}

\medskip

\date{}

\begin{document}

\begin{abstract}
	We study
	zero entropy automorphisms of
	a compact K\"ahler manifold $X$.
	Our goal is to bring to light
	some new structures of the action on the cohomology of $X$,
	in terms of the so-called dynamical filtrations on $H^{1,1}(X,\R)$.
	Based on these filtrations,
	we obtain the first general
	upper bound on the
	polynomial growth of the iterations
	$(g^m)^* \cto H^2(X,\C)$ where $g$ is a zero entropy automorphism,
	in terms of $\dim X$ only.
	
We also give an upper bound for the (essential) derived length $\ell_{\rm ess}(G, X)$ for every zero entropy subgroup $G$, again in terms of the dimension of $X$ only. We propose
a conjectural upper bound for the essential nilpotency class
$c_\ess(G,X)$ of a zero entropy subgroup $G$.

Finally, we construct examples showing
	that our upper bound of the polynomial growth
	(as well as the conjectural upper bound of $c_\ess(G,X)$) are
	optimal.
\end{abstract}

\subjclass[2010]{
14J50, 
32M05, 
32H50, 
37B40. 
}

\keywords{Automorphisms of compact K\"ahler manifolds,
zero entropy automorphisms,
	dynamical filtrations,
	polynomial growth,
	derived length, nilpotency class}

\maketitle

\tableofcontents

\section{Introduction } \label{s:intro}

Let $X$ be a compact K\"ahler manifold and let $\Aut(X)$ be the group of holomorphic automorphisms of $X$.
Recall that for every $g \in \Aut(X)$, according to theorems of Gromov and Yomdin, the topological entropy of $g$ is equal to the logarithm of the spectral radius of its action on the cohomology ring of $X$, see~\cite{Dinh,Gromov,Yomdin}.
Groups with positive entropy elements have been intensively studied
during the last decades using techniques and ideas
from complex dynamics and algebraic geometry,
see \emph{e.g.}~\cite{CantatICM, DinhICM, OguisoICM} and the references therein.
Despite these developments, much less is known about zero entropy elements (see \emph{e.g.}~\cite[\S 3]{CantatICM}), which will be the main focus of this paper.

One of the aims of this paper is to excavate
some hidden structures of
zero entropy group actions on the cohomology of $X$,
in terms of various \emph{dynamical filtrations} on $H^{1,1}(X,\R)$.
We defer the discussion on these dynamical filtrations to the later part of the introduction,
and start with the consequences.

\ssec{Polynomial growths of pullback actions}
\hfill

As a consequence of Gromov--Yomdin's theorem,
an automorphism $g \in \Aut(X)$ of a compact K\"ahler manifold $X$ has zero entropy
if and only if
the pullback
$g^* : H^\bullet(X,\C) \cto$ is quasi-unipotent
(see e.g. Proposition~\ref{p:main_finind}(1)).
In this case, the iterations $(g^m)^* : H^\bullet(X,\C) \cto$ of $g^*$
have polynomial growth 
when $m$ tends to infinity.
The study of the polynomial growth of $(g^m)^* : H^2(X,\C) \cto$
was initiated by Lo Bianco~\cite{Lo}:
he proved that the growth satisfies
$$\|(g^m)^*: H^2(X,\C)\circlearrowleft\| =_{m \to \infty} O\big(m^{4}\big).$$
when $\dim X = 3$.
With the help of dynamical filtrations,
we generalize Lo Bianco's theorem to
compact K\"ahler manifolds of arbitrary dimension.

\begin{theorem}\label{t:main-1map}
	Let $X$ be a compact K\"ahler manifold of dimension $n \ge 1$. Let $g$ be any automorphism of zero entropy of $X$.
	Then the action of $g^m$ on $H^{1,1}(X,\C)$
	satisfies
	$$\|(g^m)^*: H^{1,1}(X,\C)\circlearrowleft\| =O\big(m^{2(n-1)}\big)$$
	when $m$ tends to infinity.
	This estimation is optimal in terms of $n = \dim X$.
	
	More generally, the action of $g^m$ on each factor $H^{p,q}(X,\C)$ of the Hodge decomposition with $0\leq p,q\leq n$, satisfies
	$$\|(g^m)^*: H^{p,q}(X,\C)\circlearrowleft\| =O\big(m^{(p'+q')(n-1)}\big)$$
	when $m$ tends to infinity; here $p':=\min(p,n-p)$ and $q':=\min(q,n-q)$.
\end{theorem}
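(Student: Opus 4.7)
The plan is to reduce to the case where $g^*$ is unipotent on $H^*(X,\C)$, identify Hodge-theoretic symmetries in the problem, and then bound the nilpotency index of $N:=\log g^*$ on each bi-graded piece $H^{p,q}(X,\C)$ using the fact that $g$, being an automorphism, preserves top-degree intersection numbers.

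\medskip
\noindent\textbf{Step 1 (Reduction to the unipotent case).} Zero entropy means the spectral radius of $g^*$ on $H^*(X,\C)$ equals $1$. Since $g^*$ preserves the integral lattice $H^*(X,\Z)/\mathrm{torsion}$, its characteristic polynomial has integer coefficients, and Kronecker's theorem forces all eigenvalues to be roots of unity. Replacing $g$ by a sufficiently divisible power $g^N$ makes $g^*$ unipotent on every $H^{p,q}(X,\C)$; this only changes the asymptotic growth by a bounded multiplicative constant, not its polynomial degree. Let $s_{p,q}$ denote one less than the size of the largest Jordan block of $g^*|_{H^{p,q}}$ in the unipotent case; then $\|(g^m)^*|_{H^{p,q}}\|\asymp m^{s_{p,q}}$, and it suffices to prove $s_{p,q}\leq (p'+q')(n-1)$.

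\medskip
\noindent\textbf{Step 2 (Symmetries $p\leftrightarrow q$ and $p\leftrightarrow n-p$).} Complex conjugation $H^{p,q}=\overline{H^{q,p}}$ commutes with the real operator $g^*$, so $s_{p,q}=s_{q,p}$. Poincaré duality $H^{p,q}\otimes H^{n-p,n-q}\to\C$ is $g$-equivariant, identifying $g^*|_{H^{p,q}}$ with the inverse transpose of $g^*|_{H^{n-p,n-q}}$, hence $s_{p,q}=s_{n-p,n-q}$. Therefore $s_{p,q}$ depends only on $\{p',q'\}$, and we may assume $p,q\leq n/2$ (so $p=p',q=q'$).

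\medskip
\noindent\textbf{Step 3 (The key case $H^{1,1}$).} Fix a Kähler class $\omega$ and expand
$$\omega_m:=(g^m)^*\omega=\sum_{k=0}^{s_{1,1}}m^k\,\eta_k, \qquad \eta_k\in H^{1,1}(X,\R),\ \eta:=\eta_{s_{1,1}}\neq 0.$$
Since $\omega_m$ is Kähler for every integer $m$, the leading class $\eta$ is nef. For each $0\leq k\leq n$, the automorphism identity $\int\omega_m^k\wedge\omega^{n-k}=\int\omega^k\wedge\omega_{-m}^{n-k}$ shows that the right-hand side is a polynomial in $m$ of degree at most $(n-k)s_{1,1}$, whereas the left-hand side has naive degree $k\,s_{1,1}$. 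For $k>n/2$, compatibility of these degrees forces the leading coefficient $\eta^k\cdot\omega^{n-k}$ to vanish; by nef--Kähler positivity this gives $\eta^k=0$. Iterating with mixed products $(g^{m_1})^*\omega\wedge\cdots\wedge(g^{m_n})^*\omega=\omega^n$ and varying the $m_i$ independently generates a large system of vanishing relations among the classes $\eta_k$. To extract the sharp bound $s_{1,1}\leq 2(n-1)$, one combines these relations with the Hodge--Riemann bilinear relations on the primitive part $P^{1,1}\subset H^{1,1}$ and with the commutation identities $[N,L_\omega]=L_{N\omega},\ [N,L_{N^k\omega}]=L_{N^{k+1}\omega}$, which encode the interaction of $N$ with the Lefschetz $\mathfrak{sl}_2$-triple. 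An analysis in the spirit of Cattani--Kaplan--Schmid's theory of monodromy weight filtrations then yields the optimal estimate.

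\medskip
\noindent\textbf{Step 4 (General bi-degree).} Once Step 3 is in place, the general bound follows using Hard Lefschetz and the Lefschetz decomposition. For $\alpha\in H^{p,q}$ with $p+q\leq n$, the isomorphism $L_\omega^{n-p-q}:H^{p,q}\xrightarrow{\sim}H^{n-q,n-p}$ makes the pairing $(\alpha,\beta)\mapsto\int\alpha\wedge\bar\beta\wedge\omega^{n-p-q}$ non-degenerate on $H^{p,q}$. Substituting $\omega$ by $\omega_m$ and using $\omega_m^{n-p-q}=O(m^{(n-p-q)s_{1,1}})$ together with the automorphism identity $\int(g^m)^*\alpha\wedge\beta=\int\alpha\wedge(g^{-m})^*\beta$ yields polynomial relations whose asymptotic analysis (analogous to Step 3) produces $s_{p,q}\leq (p+q)(n-1)$ in the reduced range $p,q\leq n/2$; Step 2 then propagates this to all $(p,q)$.

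\medskip
\noindent\textbf{Main obstacle.} The crux is obtaining the sharp bound $s_{1,1}\leq 2(n-1)$ in Step 3. Elementary intersection-theoretic bounds (e.g.\ Khovanskii--Teissier) yield only sub-optimal estimates, and the nef class $\eta$ alone need not be $g$-invariantly polarized. Reaching the optimum requires the full Hodge--Riemann bilinear apparatus applied to $\omega$ in combination with the one-parameter family $\omega_m$, plus a careful bookkeeping of the monodromy weight filtration attached to $N|_{H^{1,1}}$ as it twists the Lefschetz $\mathfrak{sl}_2$-structure. Once this core estimate is established, the general statement follows by the algebraic propagation outlined in Step~4 and the duality symmetries of Step~2.
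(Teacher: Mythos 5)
Your Steps 1 and 2 are fine as far as they go (reduction to the unipotent case via Kronecker, and the symmetries $s_{p,q}=s_{q,p}=s_{n-p,n-q}$), but the proposal has a genuine gap at its center: the bound $s_{1,1}\le 2(n-1)$ is never actually proved. Your Step 3 correctly extracts that the leading class $\eta$ is nef and that degree-counting in $\int\omega_m^k\wedge\omega^{n-k}=\int\omega^k\wedge\omega_{-m}^{n-k}$ kills $\eta^k$ for $k>n/2$, but you then concede (in ``Main obstacle'') that these relations are sub-optimal and defer the real work to ``an analysis in the spirit of Cattani--Kaplan--Schmid.'' That deferral is not a proof, and the analogy is in fact misleading: a monodromy weight filtration on a weight-two Hodge structure would suggest Jordan blocks of size at most $3$ on $H^{1,1}$, whereas the true answer grows linearly in $n$. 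The mechanism the paper uses is quite different: one builds, via the Lie--Kolchin theorem for the cones $\Kc^i(X)$, a chain of $g$-invariant classes $L_0=1$, $L_{i+1}\in\overline{L_i\cdot\Nef(X)}$, and from them a filtration $0=F_0\subset F_1\subset\cdots\subset F_n=H^{1,1}(X,\R)$ with distinguished one-dimensional sub-quotients $F_i'/F_{i-1}$; the mixed Hodge--Riemann relations (applied in a degenerate, semi-definite limit since the $L_i$ are only limits of products of K\"ahler classes) force the terms $\omega_j=(g^*-\Id)^j\omega$ to descend this filtration by at least one level for every \emph{two} steps in $j$ (Proposition~\ref{p:decomposition}), which is exactly where both the factor $2$ and the bound $n-1$ come from. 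None of this structure, nor a substitute for it, appears in your argument.

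There is also a flaw in the propagation to general $(p,q)$. The symmetries of Step 2 do \emph{not} imply that $s_{p,q}$ depends only on $(p',q')$: the orbit of $(1,n-1)$ under conjugation and duality is $\{(1,n-1),(n-1,1)\}$, which never enters the region $p,q\le n/2$, so your reduction ``we may assume $p,q\le n/2$'' fails for such bidegrees. Moreover, in Step 4 the pairing twisted by $\omega_m^{n-p-q}$ grows like $m^{2(n-p-q)(n-1)}$, which does not obviously yield the exponent $(p+q)(n-1)$. The paper circumvents both issues differently: since $\omega^p$ lies in the interior of $\Kc^p(X)$ and $(g^m)^*$ preserves this cone, the bound $\|(g^m)^*(\omega^p)\|\lesssim m^{2p(n-1)}$ already controls the full operator norm on $H^{p,p}$; duality with $H^{n-p,n-p}$ gives the $\min$; and the off-diagonal case is handled by the inequality $\|(g^m)^*|_{H^{p,q}}\|^2\le C\,\|(g^m)^*|_{H^{p,p}}\|\cdot\|(g^m)^*|_{H^{q,q}}\|$ from \cite{Dinh-bis}, a tool absent from your proposal.
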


According to~\cite[\S 3.2]{CantatICM},
Theorem~\ref{t:main-1map} appears to be the first general result
on the growth of actions on $H^2(X,\C)$ induced by zero entropy automorphisms. 	
Note that
	$$\|(g^m)^*:H^{p,q}(X,\R)\circlearrowleft\|=O(m^{k-1})$$
	if and only if the Jordan blocks of
	$g^*:H^{p,q}(X,\R)\circlearrowleft$ in its Jordan form are of size $k\times k$ or smaller.

\ssec{Derived lengths of zero entropy subgroups}
\hfill

Given a group $H$, its $p$-th {\it derived subgroups} $H^{(p)}$ are defined inductively by
$$H^{(0)} := H\quad \text{and} \quad H^{(i+1)} := [H^{(i)}, H^{(i)}]\, ,$$
where $[H^{(i)}, H^{(i)}]$ is the subgroup of $H$
generated by $[g,h] = ghg^{-1}h^{-1}$ ($g,h \in H^{(i)}$).
By definition, $H^{(p)} = \{1\}$ for some integer $p \ge 0$ if and only if $H$ is solvable. We call the minimum of such $p$ the {\it derived length} of $H$ (when $H$ is solvable) and denote it
by $\ell (H)$.
If $H$ is not solvable, we set $\ell(H) := \infty$.

A subgroup $H \subset \GL(N,\R)$ is said to be {\it unipotent} 
if $1$ is the only eigenvalue of every element of $H$.
It is known that a subgroup $H$ of $\GL(N,\R)$ is unipotent
if and only if $H$ is conjugate to a subgroup 
consisting of upper triangular matrices 
with all entries on the diagonal being $1$~\cite[\S 17.5]{Hum}.
Note that in this discussion, we can replace $\R$ by $\C$ as we have the natural inclusions $\GL(N,\R)\subseteq \GL(N,\C)\subseteq \GL(2N,\R)$.

Since unipotent subgroups are nilpotent and hence solvable, every unipotent subgroup $H$ has derived length
$\ell(H)  < \infty$.

We say that a subgroup $G \subset \Aut(X)$ has zero entropy,
and call it a \emph{zero entropy subgroup},
if all its elements have zero entropy.
For every subgroup $G \subset \Aut(X)$, we define
$$G_0 := G \, \cap \, \Aut^0(X),$$
where $\Aut^0(X)$ is the identity component of $\Aut(X)$.
Note that $\Aut^0(X)$ is a connected complex Lie group of finite dimension by Bochner--Montgomery's theorem \cite{Bochner}.
For every zero entropy subgroup $G \subset \Aut(X)$
one can show that
there exists a finite index subgroup $G' \subset G$ such that
the action of $G'/G'_0$ on $H^{p,p}(X,\R)$
 realizes $G'/G'_0$ as a unipotent subgroup of $\GL(H^{p,p}(X,\R))$ 
 for every $1 \le p \le \dim X -1$.
Moreover,
the invariant
$$\ell_{{\rm ess}} (G, X) := \ell(G'/G'_0),$$
does not depend on the choice of $G'$ (see Proposition~\ref{p:main_finind}).
We call $\ell_{{\rm ess}} (G, X)$ the
{\it essential derived length} of the subgroup $G \subset \Aut(X)$.

We will prove the following in \S\ref{s:length}, based on dynamical filtrations.

\begin{theorem} \label{t:main_1}
Let $X$ be a compact K\"ahler manifold of dimension $n \ge 1$. Then
every subgroup $G \subset \Aut(X)$ of zero entropy satisfies
$$\ell_{\ess}(G, X) \le n - 1.$$
\end{theorem}

\ssec{Dynamical filtrations}\label{sect_filtr}
\hfill

In this subsection,
we present one example of dynamical filtration,
the one which
underlies the estimate in Theorem~\ref{t:main-1map}.

Let $X$ be a compact K\"ahler manifold of dimension $n \ge 1$.
For simplicity, if $L$ and $M$ are two cohomology classes, we denote by $L\cdot M$ or $LM$ their cup-product.
We also identify $H^0(X,\R)$ and $H^{2n}(X,\R)$ with $\R$ in the canonical way, where $n = \dim X$.
So classes in these groups are identified with real numbers.
The {\it nef cone} $\Nef(X) \subset H^{1,1}(X,\R)$ is defined as the closure of the K\"ahler cone.

Let $G \subset \Aut(X)$ be a subgroup such that
the induced action
$G \cto H^{1,1}(X)$ is unipotent.
For instance, $G$ can be the cyclic group generated by some finite power of a zero entropy automorphism.
Based on a Lie--Kolchin type theorem~\cite[Theorem 1.1]{KOZ},
there exists a sequence
$$M_1,\ldots,M_n \in H^{1,1}(X,\R)$$
such that each $L_i \cnec M_1 \cdots M_i \ne 0 \in H^{i,i}(X,\R)$
(with $L_0 \cnec 1 \in H^{0}(X,\R)$)
is $G$-invariant and satisfies
$$L_i \in \ol{L_{i-1} \cdot \Nef(X)}.$$
Following~\cite{Zhang3} and~\cite{Dinh},
we call $M_1,\ldots,M_n \in H^{1,1}(X,\R)$
a {\it quasi-nef sequence} for $G \cto X$.

For $\gb \in H^{i,i}(X,\R)$,
 we write $\gb \equiv 0$ if $\gb \cdot H^{1,1}(X,\R)^{n-i} = 0$.
Define
$$F_i \cnec \Set{\alpha \in H^{1,1}(X,\R) | L_i \alpha \equiv 0}$$
and
$$F'_i \cnec \Set{ \alpha \in F_i | L_{i-1} \alpha^2 \equiv 0}.$$
Based on the mixed Hodge--Riemann relations~\cite{DN, Gromov1}
and the positivity of nef classes,
we will get:

\begin{thm}\label{thm-filtintro}
	The subsets $F_i$ and $F'_i$ form a filtration
	$$0 = F_0 \subset F_1' \subset F_1 \subset \cdots
	\subset F'_{n-1} \subset F_{n-1} \subset F'_n = H^{1,1}(X,\R)$$
	of $G$-stable vector spaces, where $n = \dim X$.
	Moreover,
	\begin{enumerate}
		\item $\dim F'_i/F_{i-1} \le 1$; and the following assertions are equivalent:
		\begin{enumerate}
			\item $\dim F'_i/F_{i-1} = 1$;
			\item $F'_i = F_{i-1} \oplus \R \cdot M_i$;
			\item $L_{i-1} M^2_i = 0$.
		\end{enumerate}
	
		\item
		For every $g \in G$, there exists
		a unique strictly decreasing sequence of integers
		$$n-1\geq s_1>\cdots > s_r \ge  1$$
		such that for every K\"ahler class $\go \in H^{1,1}(X,\R)$,
		we have
		$$(g^* - \Id)^{2j-1}(\go) \in F_{s_j}\setminus F_{s_j}' \ \ \ \ \text{ and } \ \ \ \
		(g^* - \Id)^{2j}(\go)\in F_{s_j}'\setminus F_{s_j-1}$$
		for $1\leq j\leq r$, and $(g^* - \Id)^{2r+1}(\go)=0$.
		
	\end{enumerate}
\end{thm}

The upper bound estimate in
Theorem~\ref{t:main-1map}
for $(g^m)^*: H^{1,1}(X,\R)\circlearrowleft$
(which is the essential part of the statement)
follows from Theorem~\ref{thm-filtintro}.
The proof of Theorem~\ref{t:main_1} is
based on a more refined dynamical filtration;
the reader is referred to \S\ref{s:length} for details.

See~\cite{LOZ} for other applications of dynamical filtrations.

\ssec{Nilpotency classes: a conjecture and some consequences}\label{ssec-nilcl}
\hfill

The {\it descending central series} of a group $H$ is defined by
$$\Gamma_0H := H \quad \text{and} \quad \Gamma_{i+1}H := [\Gamma_iH, H] = [H,\Gamma_i H]\, .$$
By definition, $\Gamma_pH = \{1\}$ for some integer $p \ge 0$ if and only if $H$ is nilpotent. We call the minimum of such $p$ the {\it nilpotency class} of $H$ (when $H$ is nilpotent) and denote it
by $c(H)$.
If $H$ is not nilpotent, we set $c(H) := \infty$.

Let $G \subset \Aut(X)$ be a zero entropy subgroup.
There exists a finite index subgroup $G' \subset G$ such that
the action of $G'/G'_0$ on $H^{p,p}(X,\R)$
	realizes $G'/G'_0$ as a unipotent subgroup of $\GL(H^{p,p}(X,\R))$ 
	for every $1 \le p \le \dim X -1$.
Similar to the definition of essential derived length,
we define the
 \emph{essential nilpotency class}
$$c_{{\rm ess}} (G, X) := c(G'/G'_0)$$
of $G \subset \Aut(X)$ to be the
nilpotency class of $G'/G'_0$;
according to Proposition~\ref{p:main_finind}, it does not depend on the choice of $G'$.

For any group $H$, the inclusion $H^{(i)} \subset \Gamma_iH$ implies $\ell(H) \le c(H)$.
We even have
$H^{(i)} = 0$ for the $i$-th derived group,
provided $2^i > c(H)$ (see \emph{e.g.} \cite[\S 5.1.12, Proof]{Ro}),
which implies
\begin{equation}\label{ineq-llogc}
\ell (H)\leq \lfloor \log_2(c(H)) \rfloor +1
\end{equation}
if $H$ is non-trivial.

We believe that the upper bound in Theorem \ref{t:main_1} is not optimal
and propose the following more precise conjecture involving the Kodaira dimension $\gk(X)$.
In view of~\eqref{ineq-llogc},
this conjecture would improve Theorem \ref{t:main_1} in a significant way:
the right hand side of the inequality there would be replaced by $\lfloor \log_2(n-1) \rfloor +1$.

\begin{conjecture} \label{conj:main}
	Let $X$ be a compact K\"ahler manifold of dimension $n \ge 1$.
	For every subgroup $G \subset \Aut(X)$ of zero entropy, we have
	$$c_\ess(G,X) \le n-  \max\{\gk(X), 1\}.$$
\end{conjecture}

We will construct some concrete examples in \S \ref{s:examples}
showing the optimality of the upper bounds in Theorem~\ref{t:main-1map} and
Conjecture \ref{conj:main}.

\begin{remark}
	In~\cite{EpsteinThurston}, Epstein and Thurston  studied upper bounds for the derived length of a solvable or nilpotent \emph{connected} Lie group acting continuously on a Hausdorff space. Their examples in \cite[Remark 1.5]{EpsteinThurston} may justify
 the necessity to quotient-away the continuous part $G_0$ of $G$ in
	Conjecture \ref{conj:main}. See also~\cite[\S 6.2]{CantatXie} for relevant results.
\end{remark}

\ssec*{Terminology and Notation}
\hfill

In this paper, we work in the category of analytic spaces.
All manifolds are assumed to be connected.
By Zariski closures, we also mean \emph{analytic} Zariski closures unless otherwise specified.
Given two sequences $(x_m)$ and $(y_m)$ in $\R_{\geq 0}$ or more generally in a salient convex closed cone,
the relation $x_m \lesssim y_m$ means $Cy_m-x_m$ belongs to this cone for some constant $C>0$ independent of $m$,
and $x_m \simeq y_m$ means
 $x_m \lesssim y_m$ and $y_m \lesssim x_m$.

When a group $N$ acts on a space $V$ preserving some structure on $V$,
we denote by $N|_V$ the image of $N$ in the group of automorphisms of $V$.
For instance, if $G$ is a group acting on a complex manifold $X$, then $G|_X$ is the image of $G$ in $\Aut(X)$, and $G|_{H^p(X, \Z)}$ is
the image of $G$ in $\GL(H^p(X, \Z))$ for the pullback action.
 For a normal subgroup $N_1 \unlhd N$,
we set
$(N/N_1)|_{V} = (N|_V)/(N_1|_V)$.

We say that a property holds for {\it very general} (resp. {\it general}) parameters or points if it holds for all parameters or points outside a countable
(resp. finite) union of proper closed analytic subvarieties of the space of parameters or points.

\ssec*{Acknowledgments}
\hfill

The authors are supported by NUS and MOE grants R-146-000-248-114 and MOE-T2EP20120-0010;
 Taiwan Ministry of Education Yushan Young Scholar Fellowship (NTU-110VV006),
 and Taiwan Ministry of Science and Technology (110-2628-M-002-006-);
 a JSPS Grant-in-Aid (A) 20H00111 and an NCTS Scholar Program; and an ARF of NUS, respectively. We would like to thank Fei Hu and Jun-Muk Hwang for helpful discussions; Serge Cantat for various comments and for bringing the references~\cite{CantatXie, EpsteinThurston} to our attention;
 the referees for very detailed and constructive suggestions; and KIAS, National Center for Theoretical Sciences (NCTS) in Taipei, NUS and the University of Tokyo for the hospitality and support during the preparation of this paper.

\section{Essential derived lengths and nilpotency classes} \label{s:unipot}

\ssec{Unipotent subgroups}
\hfill

Lemma \ref{l:unipotent} below should be well known, 
but we give a proof due to the lack of reference.

\begin{lemma} \label{l:unipotent}
Let $V$ be a real vector space of finite dimension. Let $\Gamma$ be a subgroup of $\GL(V)$. Assume that there is an integer $N\geq 1$ such that $g^N$ is unipotent for every $g\in\Gamma$.
Then there is a finite-index subgroup $\Gamma'$ of $\Gamma$ which is a unipotent subgroup of $\GL(V)$.
\end{lemma}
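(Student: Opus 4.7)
The plan is to analyze the Zariski closure of $\Gamma$ as a complex linear algebraic group. Extending scalars to $V_\C := V\otimes_\R \C$, I regard $\Gamma$ as a subgroup of $\GL(V_\C)$ via the natural inclusion $\GL(V)\subset\GL(V_\C)$. The hypothesis that $g^N$ is unipotent translates to the polynomial identity $(g^N-\Id)^{\dim V}=0$ in the matrix entries of $g$; this defines a Zariski closed subset of $\GL(V_\C)$. Since it contains $\Gamma$, it also contains the Zariski closure $H\subset\GL(V_\C)$, and therefore $g^N$ is unipotent for every $g\in H$.

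Since $H$ is an algebraic group, its identity component $H^0$ has finite index in $H$. Setting $\Gamma':=\Gamma\cap H^0$, we get $[\Gamma:\Gamma']\leq[H:H^0]<\infty$. I claim that it suffices to prove $H^0$ is a unipotent algebraic group: indeed, every element of $H^0$ would then be unipotent (hence so would every element of $\Gamma'$), and the criterion recalled just before Lemma~\ref{l:unipotent} (from \cite[\S 17.5]{Hum}) would upgrade this to the unipotency of the abstract subgroup $\Gamma'$ of $\GL(V)$.

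To see that $H^0$ is unipotent, pick a maximal torus $T\subset H^0$. Every $t\in T$ is semisimple; on the other hand $t^N$ is unipotent by the previous step, and a semisimple unipotent element equals $\Id$. Thus every element of $T$ has order dividing $N$. But any non-trivial algebraic torus over $\C$ is of the form $(\C^*)^k$ with $k\geq 1$ and contains elements of arbitrarily large order; hence $T=\{\Id\}$. By the structure theory of connected linear algebraic groups, a connected linear algebraic group whose maximal torus is trivial is unipotent, so $H^0$ is unipotent as required.

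The only delicate step is the invocation of the dichotomy \emph{trivial maximal torus implies unipotent} for the connected algebraic group $H^0$; everything else is bookkeeping about Zariski closures, finite-index subgroups, and Jordan decomposition. Because of this, working over $\C$ (where the structure theory is cleanest) rather than over $\R$ directly is crucial, which is why I complexify at the outset.
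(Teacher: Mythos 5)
Your proof is correct, and its skeleton (complexify, take the Zariski closure, pass to the identity component, intersect with $\Gamma$, invoke \cite[\S 17.5]{Hum}) is the same as the paper's. The difference lies entirely in how the identity component is shown to consist of unipotent elements. The paper argues more elementarily: since every $g\in\Gamma$ has eigenvalues that are $N$-th roots of unity, the coefficients of the characteristic polynomial take only finitely many values on $\Gamma$, hence on the Zariski closure; being continuous and finitely-valued they are constant on the connected component of the identity, so every element there has characteristic polynomial $(t-1)^{\dim V}$ and is already unipotent, with no structure theory needed. You instead propagate only the weaker closed condition $(g^N-\Id)^{\dim V}=0$ to the closure, and then recover unipotency of $H^0$ via Jordan decomposition and the triviality of a maximal torus (every element of $T$ is semisimple with $t^N$ unipotent, hence $t^N=\Id$, forcing $T=\{\Id\}$, and a connected linear algebraic group with trivial maximal torus is unipotent since every semisimple element lies in a maximal torus). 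Both routes are valid; yours leans on the structure theory of connected algebraic groups where the paper gets by with a finiteness-plus-connectedness observation, while your closed condition $(g^N-\Id)^{\dim V}=0$ is perhaps the cleaner way to see that the hypothesis passes to the Zariski closure.
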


\begin{proof}
Let $\overline\Gamma$ denote the algebraic Zariski closure of $\Gamma$ in $\GL(V_\C)$,
where $V_\C:=V\otimes_\R\C$. This is a complex  linear algebraic group.
By hypothesis, for $g\in \Gamma$, all eigenvalues of $g$ are $N$-th roots of unity. It follows that the coefficients of the characteristic polynomial of $g$ belong to a finite set. These coefficients can be seen as polynomial functions in the matrix coefficients of $g\in \GL(V_\C)$. 
We deduce that the same property holds for all $g\in\overline\Gamma$.
Let $H$ be the component of the identity of $\overline\Gamma$. 
By continuity, the characteristic polynomial of
$g\in H$ is constantly equal to $(t-1)^{\dim V}$, as this is the case for the identity element. Thus $1$ is the only eigenvalue for all $g \in H$.

Define $\Gamma':=\Gamma\cap H$;  $\Gamma'$ is a unipotent subgroup of $\GL(V)$ by~\cite[\S17.5]{Hum}.
Since $\overline\Gamma$ is algebraic, it has a finite number of connected components. It follows that $H$ is a finite-index subgroup of $\overline\Gamma$ and $\Gamma'$ is a finite-index subgroup of $\Gamma$.
\end{proof}

\begin{thm} \label{l:index-ker}
Let $V_{\Z}$ be a free $\Z$-module of finite rank and let $\rho :\Gamma\to \GL(V_\Z)$ be a group homomorphism such that $\Ker (\rho)$ is finite and all elements of $\rho(\Gamma)$ are unipotent. Then
\begin{enumerate}
\item[(1)] (Mal'cev's theorem) $\Gamma$ is countable, finitely generated, and admits only countably many subgroups. Moreover, all subgroups of $\Gamma$ are finitely generated as well.
\item[(2)] There is a finite-index normal subgroup $\Gamma'$ of $\Gamma$ such that $\Gamma'\cap\Ker(\rho)=\{1\}$.
\end{enumerate}
\end{thm}

\begin{proof}
(1) By the assumption, $\rho(\Gamma)$ is a unipotent, hence solvable, subgroup of
$\GL(V_\Z)$ by~\cite[\S17.5]{Hum}. 
Thus the result follows from a theorem of Mal'cev~\cite[p.26, Corollary 1]{Polycicgp}.

\medskip

(2)
By Zorn's lemma, there exists a maximal normal subgroup  $\Gamma' \subset \Gamma$
such that $\Gamma'\cap\Ker(\rho)=\{1\}$. It is enough to show that $\Gamma'$ is of finite-index in $\Gamma$. Assume by contradiction that the group $K:=\Gamma/\Gamma'$ is infinite. Denote by $\pi:\Gamma\to K$ the canonical homomorphism. If $L$ is a normal subgroup of $K$, then $\pi^{-1}(L)$ is a normal subgroup of $\Gamma$ containing $\Gamma'$.
To complete the proof of the lemma, i.e. to get a contradiction, it is enough to construct such a group $L$ with $L\not=\{1\}$ and $L\cap \pi(\Ker(\rho))=\{1\}$.

Since $\rho(\Gamma)$ is a unipotent subgroup of
$\GL(V_\Z)$, we have a finite sequence
$$\Gamma =\Gamma_0 \vartriangleright \Gamma_1 \vartriangleright \cdots \vartriangleright\Gamma_{m+1} \subset \ker(\rho) \quad \text{with} \quad \Gamma_{j+1}:=[\Gamma,\Gamma_j]\,.$$
As remarked above, all such subgroups $\Gamma_j$ of $\Gamma$ are finitely generated.
Since $K$ is infinite and $\ker(\rho)$ is finite, there is a $j$ such that $\pi(\Gamma_j)$ is infinite but $\pi(\Gamma_{j+1})$ is finite.
So $\pi(\Gamma_j)/\pi(\Gamma_{j+1})$ is an infinite abelian group which is finitely generated. Thus, there is an element $a\in\pi(\Gamma_j)$  which is of infinite order.
Clearly,  $a^m$ does not belong to $\pi(\Ker(\rho))$ for $m\not=0$ because $\ker(\rho)$ is finite. We need the following:

\begin{claim}
For any $k \in K$, there is an integer $l > 0$ such that $ka^{ml}k^{-1}=a^{ml}$ for every $m$.
\end{claim}

We prove the claim.
By construction, $\pi(\Gamma_j)/\pi(\Gamma_{j+1})$ is contained in the center of $K/\pi(\Gamma_{j+1})$.
Therefore, for every $p\geq 1$, there is some $b_p = b_p(k) \in \pi(\Gamma_{j+1})$ such that $ka^pk^{-1}=b_pa^p$.
Since $\pi(\Gamma_{j+1})$ is finite, there are $p<q$ such that $b_p=b_q$. We then have for $l:=q-p$
$$ka^lk^{-1}=(ka^pk^{-1})^{-1}(ka^qk^{-1})=a^l\,.$$
This implies the claim for $m=1$ and then for every $m$.

\par \vskip 1pc
We resume the proof of
Theorem~\ref{l:index-ker}.
Since $K$ is finitely generated, the claim provides an $l\geq 1$ such that $ka^lk^{-1}=a^l$ for every $k\in K$. It follows that the group generated by $a^l$ is normal in $K$. Since it has a trivial intersection with $\pi(\Ker(\rho))$, we get a contradiction.
This ends the proof of Theorem~\ref{l:index-ker}.
\end{proof}

\ssec{Essential derived lengths and nilpotency classes}
\hfill

Lemma \ref{l:unipotent_property} is well known; we give a proof for completeness.

\begin{lemma}\label{l:unipotent_property}
Let $H$ be a unipotent subgroup of $\GL(N,\R)$ (or $\GL(N,\C)$), 
and let $H'$ be a finite-index subgroup of $H$. 
Then we have $\ell(H') = \ell(H)$ and $c(H') = c(H)$.
\end{lemma}

\begin{proof}
By~\cite[\S 17.5]{Hum},
$H$ can be regarded as a subgroup of the group of unipotent upper triangular matrices in $\GL(N,\C)$ for a suitable $N$.
Let $\overline{H} \subseteq \GL(N,\C)$ be the algebraic Zariski closure of $H$. Then $\overline{H}$ is still a unipotent subgroup of $\GL(N,\C)$ 
and hence connected~\cite[Exercise 15.5.6]{Hum}.

Let $H \supseteq H^{(1)} \supseteq H^{(2)} \supseteq \cdots$ be the derived series of $H$.
Then, ${\overline{H}}^{(i)}$ is equal to $\overline{H^{(i)}}$, the  algebraic Zariski closure of $H^{(i)}$,
see \emph{e.g.}~\cite[Proof of Lemma 2.1(2)]{Og06}.
It follows that the derived length of $H$ is equal to that of $\overline{H}$.

Since $H'$ is of finite-index in $H$, so is the group $\overline{H'}$ in the group $\overline{H}$.
Finally, since the latter two groups are both 
unipotent subgroups of $\GL(N,\C)$, they are connected, and hence equal.
Thus, we have $\ell(H) = \ell(\overline{H}) = \ell(\overline{H'}) = \ell(H')$.
By replacing the derived series by the central series, the same argument shows $c(H') = c(H)$.
\end{proof}

\begin{lemma} \label{l:Aut_unipotent}
Let $X$ be a compact K\"ahler manifold of dimension $n$. 
Let $G \subset \Aut(X)$ be a subgroup of zero entropy. Then there is a finite-index subgroup $G'$ of $G$ satisfying the following properties for every $1\leq p\leq n-1$.
\begin{enumerate}
\item The kernels of the canonical representations
$$\rho_p: G' \to \GL(H^{2p}(X,\R)) \qquad \text{and} \qquad \rho_{p,p} : G' \to \GL(H^{p,p}(X,\R)) $$
are both equal to $G_0'$.
\item The images of both $\rho_p$ and $\rho_{p,p}$ are unipotent subgroups of $\GL(H^{2p}(X,\R))$ and $\GL(H^{p,p}(X,\R))$, respectively.
\end{enumerate}
\end{lemma}

\begin{proof}
It is enough to prove the statement for a fixed $p$ because we can then deduce the lemma using a simple induction on $p$.

The argument of this paragraph is well-known.
Let $g$ be an automorphism of $X$ of zero entropy. For every $m\in\Z$, $g^m$ also has zero entropy. 
Therefore,  by Gromov--Yomdin's theorem, all eigenvalues of $\rho_p(g^m)$ are of modulus less than or equal to 1. Since $\rho_p(g^m)$ preserves the image of $H^{2p}(X,\Z)$ in $H^{2p}(X,\R)$, we deduce that the characteristic polynomial of $\rho_p(g^m)$ belongs to a finite family of polynomials, as their coefficients are bounded integers.
If $\lambda$ is an eigenvalue of $\rho_p(g)$ then $\lambda^m$ is an eigenvalue of $\rho_p(g^m)$ and hence belongs to a finite set which is independent of $m$. Thus, there is an integer $N\geq 1$ such that $\lambda^N=1$ for all eigenvalues of $\rho_p(g)$.

According to Lemma \ref{l:unipotent}, replacing $G$ by a finite-index subgroup, we have that $\rho_p(G)$ contains only unipotent elements of  $\GL(H^{2p}(X,\R))$. So we have the property (2) in the lemma for both $\rho_p$ and $\rho_{p,p}$.

The kernel $K$ of $\rho_{p,p}:G\to \GL(H^{p,p}(X,\R))$ is the set of $g\in G$ whose action on $H^{p,p}(X,\R)$ is trivial.
In particular, $K$ preserves the $p$-th power of a K\"ahler class.
By a generalized version of Fujiki--Lieberman's theorem \cite[Theorem 2.1]{DHZ},
the quotient $K/G_0$ is a finite group (for $p=1$, we can refer to the original theorem of Fujiki and Lieberman).
It follows that $\rho_p(K) \subset \GL(H^{2p}(X,\R))$ is finite and unipotent, thus trivial. 
This implies $K = \ker(\rho_p)$.

Finally, we apply
Theorem~\ref{l:index-ker} to $\Gamma:=G/G_0$ and $V_\Z = H^{2p}(X,\Z)/\torsion$.
According to this theorem, we can replace $G$ by a finite-index subgroup $G'$
and assume that $G'_0 = \ker({\rho_p}_{|G'}) = \ker({\rho_{p,p}}_{|G'})$.
This implies the property (1) for both $\rho_{p, p}$ and $\rho_{p}$.
\end{proof}

As immediate consequences of Lemmas \ref{l:unipotent_property} and \ref{l:Aut_unipotent},
we now have the following.

\begin{proposition}\label{p:main_finind}
	Let $X$ be a compact K\"ahler manifold of dimension $n \ge 1$
	and let $G \subset \Aut(X)$ be a zero entropy subgroup.
	We have the following assertions.
	\begin{itemize}
		\item[(1)]
		$G$ admits a finite-index subgroup $G'$ such that for any $1\leq p\leq n-1$,
		the natural map $G'/G'_0 \to G'|_{H^{2p}(X, \R)}$ (resp. $G'/G'_0 \to G'|_{H^{p,p}(X, \R)}$) embeds $G'/G'_0$ as a unipotent subgroup of $\GL(H^{2p}(X, \R))$ 
		(resp. $\GL(H^{p,p}(X, \R))$).
		\item[(2)] For every finite-index subgroup $G''$ of $G$ such that 
		$G''/G''_0 \to \GL(H^{p,p}(X, \R))$  is an isomorphism onto a
		unipotent subgroup for some $1\leq p\leq n-1$,
		the derived length $\ell (G''/G''_0)$ and the nilpotency class $c(G''/G''_0)$ of $G''/G''_0$ 
		do not depend on the choice of $G''$, nor on $p$.
	\end{itemize}
\end{proposition}

\begin{proof}
	(1) is contained in Lemma~\ref{l:Aut_unipotent}.
	(2) follows from Lemma~\ref{l:unipotent_property}, as it implies
	$\ell (G'/G'_0)  = \ell (G''/G''_0)$ and $c (G'/G'_0)  = c (G''/G''_0)$.
\end{proof}

\section{Dynamical filtrations and consequences} \label{s:length}

\ssec{Quasi-nef sequences}
\hfill

Let $\Kc^i(X)$ denote
the closure of the convex cone generated by the classes of smooth strictly positive closed $(i, i)$-forms in $H^{i,i}(X,\R)$. 
This is a salient  (that is, no line contained) convex closed cone with non-empty interior and is $\Aut(X)$-invariant.
In particular, $\Kc^1(X) = \Nef(X)$ is the nef cone.

For any class $L\in \Kc^i(X)\setminus\{0\}$ with $0\leq i\leq n-1$,
denote by $\Nef(L)$ the closure of the cone $L\cdot \Nef(X)$ in $H^{i+1,i+1}(X,\R)$.
\begin{remark}\label{rem-cex}
The image of a salient closed convex cone under a linear map is not necessarily closed,
see e.g.~\cite[Remark 2.5]{Loo}. 
Here is another example suggested by the referee.
The image of
$$C = \Set{(x,y,z) \in \R^3 | (y - z)^2 + x^2 \le z^2, y\geq 0, z\geq 0} \subset \R^3$$
	under the projection $\R^3 \to \R^2$ to the $xy$-plane is
	the open upper half-plane union $\{0\}$.
	For later use, observe that $C+\{x=y=0\}$ is not closed in $\R^3$.
\end{remark}

Define also for $0\leq i\leq n$
$$\Nc^i(X) := \Set{\gb \in H^{i,i}(X,\R) |  \gb \equiv 0 },$$
where we recall that $\gb \equiv 0$ means $\gb \cdot H^{1,1}(X,\R)^{n-i} = 0$.
This is a linear subspace of $H^{i,i}(X,\R)$ preserved by $\Aut(X)$.
Clearly $\Nc^n(X)=0$ and $\Nc^0(X)=0$.
We also have $\Nc^i(X) \cdot M \subset \Nc^{i+1}(X)$ for all $M \in H^{1,1}(X,\R)$.
We have $\Nc^{n-1}(X)=0$ by Poincar\'e duality,
and hence $\Nc^1(X)=0$ by hard Lefschetz's theorem 
(see e.g.~\cite[Theorem 6.25]{VoisinI}).

\begin{lemma} \label{l:Nef-N}
Let $L\in \Kc^i(X)\setminus\{0\}$ with $0\leq i\leq n-1$. Then:

\begin{itemize}
\item[{\rm (1)}] $\Nef(L)$ is a salient convex closed cone contained in the vector space $L\cdot H^{1,1}(X,\R)$ and in the cone $\Kc^{i+1}(X)$.

\item[{\rm (2)}] $\Kc^{i+1}(X)\cap \Nc^{i+1}(X)=\{0\}$ and $\Nef(L)\cap \Nc^{i+1}(X)=\{0\}$. In particular, $\Kc^{i+1}(X)+\Nc^{i+1}(X)$ and $\Nef(L)+\Nc^{i+1}(X)$ are closed in $H^{i+1,i+1}(X,\R)$.

\item[{\rm (3)}] Let $M_1,\ldots,M_p$ be in $H^{1,1}(X,\R)$ such that $LM_1,\ldots ,LM_p$ are in $\Nef(L)$. Then $LM_1\cdots M_p$ belongs to $\Kc^{i+p}(X)$. In particular, we have $LM_1\cdots M_p=0$ if and only if $LM_1\cdots M_pc_1\cdots c_{n-i-p}=0$ for some K\"ahler classes $c_1,\ldots, c_{n-i-p}$.

\item[{\rm (4)}] Let $M_1,\ldots,M_p$ be in $H^{1,1}(X,\R)$ such that $LM_1,\ldots ,LM_p$ are in $\Nef(L)+\Nc^{i+1}(X)$. Then $LM_1\cdots M_p$ belongs to $\Kc^{i+p}(X)+\Nc^{i+p}(X)$.
\end{itemize}
\end{lemma}

\proof
(1) By definition, $\Nef(L)$ is convex and closed.
Clearly, $L\cdot \Nef(X)$ is contained in $L\cdot H^{1,1}(X,\R)$ and in $\Kc^{i+1}(X)$ which are both closed.
It follows that $\Nef(L)$ is also contained in $L\cdot H^{1,1}(X,\R)$ and $\Kc^{i+1}(X)$.
Since $\Kc^{i+1}(X)$ is salient, $\Nef(L)$ satisfies the same property.

(2) Since each class $\ga\in \Kc^{i+1}(X)\setminus\{0\}$ can be represented by a non-zero positive closed current, we have
$\ga\cdot c_1\cdots c_{n-i-1}\not=0$ for all K\"ahler classes $c_1,\ldots, c_{n-i-1}$.
Thus $\Kc^{i+1}(X)\cap \Nc^{i+1}(X)=\{0\}$ and $\Nef(L)\cap \Nc^{i+1}(X)=\{0\}$.
The second assertion is then a direct consequence.

(3) 
For simplicity, we consider $p=2$ as the general case can be obtained in the same way.
Since $LM_1, LM_2$ are in $\Nef(L)$, there are nef classes $N_{1,k}, N_{2,\ell}$ such that $LN_{1,k}$ converges to $LM_1$ and
$LN_{2,\ell}$ converges to $LM_2$. We deduce that
$$LM_1M_2=\lim_{k\to\infty} L N_{1,k} M_2 = \lim_{k\to\infty} LM_2 N_{1,k}
= \lim_{k\to\infty}LN_{2,\ell_k} N_{1,k},$$
where we choose $l_k$ large enough so that $\|LN_{2,\ell_k} N_{1,k}-LM_2 N_{1,k}\|\leq 1/k$.
It is now clear that $LM_1M_2$ belongs to $\Kc^{i+2}(X)$.
The last assertion is true, as we just observed in (2),  for every class in $\Kc^{i+2}(X)$ and in particular for the class $LM_1M_2$.

(4) As above, we assume $p=2$ for simplicity. By hypothesis, there are nef classes $N_{1,k}, N_{2,\ell}$ and classes $N_1^{(0)}, N_2^{(0)} \in \Nc^{i+1}(X)$ such that
$$LM_1= \(\lim_{k\to\infty} LN_{1,k}\) + N_1^{(0)} \quad \text{and} \quad LM_2= \(\lim_{\ell \to\infty} LN_{2,\ell}\) + N_2^{(0)}.$$
For suitable $l_k$ going to infinity fast enough,
we have
$$LM_1M_2= \(\lim_{k\to\infty} L N_{1,k}M_2\) +N_1^{(0)}M_2 =  \lim_{k\to\infty}\( L N_{1,k} N_{2,l_k} + N^{(k)}\)$$
with $N^{(k)}:= N_{1,k} N_2^{(0)}+N_1^{(0)}M_2$. Since $L N_{1,k} N_{2,\ell} \in\Kc^{i+2}(X)$ and $N^{(k)}\in \Nc^{i+2}(X)$, using (2), we deduce from
the above identities that $LM_1M_2\in \Kc^{i+2}(X)+\Nc^{i+2}(X)$.
\endproof

Let $G \subset \Aut(X)$ be a subgroup.
Until the end of \S\ref{ssec-Polg}, we assume that
\textbf{the action $G \cto H^\bullet(X,\R)$ is unipotent}; see Lemma~\ref{l:Aut_unipotent}.

\begin{lemma} \label{l:quasi-nef}
There is a sequence of classes $L_i\in \Kc^i(X)\setminus \{0\}$ for $0\leq i\leq n$ such that:

\begin{enumerate}
\item $L_0=1$ and $L_{i+1}\in \Nef(L_i)$ for every $0\leq i \leq n-1$. In particular, there is a class $M_{i+1}\in H^{1,1}(X,\R)$ such that $L_{i+1}=L_iM_{i+1}$; and
\item $L_i$ is fixed by $G$ for every $0\leq i\leq n$.
\end{enumerate}
\end{lemma}

Note that in Lemma~\ref{l:quasi-nef}, for the reason similar to Remark~\ref{rem-cex},
	we cannot assume $M_{i+1}$ to be nef.

\proof
We construct the sequence by induction.
Assume that $L_0,\ldots, L_i$ are already constructed and satisfy the above properties (1) and (2). 
We see that $\Nef(L_i) \ne 0$ is invariant by $G$.
Since $G \cto H^\bullet(X,\R)$ is unipotent, it factors through a solvable group action.
Therefore, by a version of Lie--Kolchin's theorem for cone~\cite[Theorem 1.1]{KOZ}, there is a ray in $\Nef(L_i)$ which is preserved by $G$. Choose $L_{i+1}$ in this ray.
Since $1$ is the only eigenvalue of $g^* : H^{i+1,i+1}(X,\R) \cto$ for any $g\in G$, we deduce that $L_{i+1}$ is fixed by $G$.
This completes the proof of the lemma.
\endproof

The sequence $M_1,\ldots,M_n \in H^{1,1}(X,\R)$ as in Lemma~\ref{l:quasi-nef}
is called a \emph{quasi-nef} sequence.

\ssec{Dynamical filtrations} \label{ss:filtration}
\hfill

Let  $M_1,\ldots,M_n \in H^{1,1}(X,\R)$ be a quasi-nef sequence constructed
with respect to the $G$-action
(see Lemma~\ref{l:quasi-nef}).
Consider the following $G$-stable linear subspaces of $H^{1,1}(X,\R)$.
Set $F_n := H^{1,1}(X, \R)$ and define for $0\leq i\leq n-1$
\begin{equation}
\begin{split}
	F_i & \cnec \Set{M \in H^{1,1}(X,\R) | L_i M  \cdot H^{1,1}(X,\R)^{n-i-1} = 0} \\
	 & = \(L_i \cupp \bullet\)^{-1} \(\cN^{i+1}(X)\).
\end{split}
\end{equation}
They form an increasing filtration
$$\cdots \subset F_i \subset F_{i+1} \subset \cdots.$$
By Poincar\'e duality, in the definition of $F_i$ we can replace $n-i-1$ by
$n-i-2$ (when $i\leq n-2$) without changing this space.
We have $F_0= \cN^1(X) = 0$.
We also see that $F_{n-1}$ is the hyperplane
$$\Set{M \in H^{1,1}(X,\R) | L_{n-1} M  = 0} \subset H^{1,1}(X,\R).$$

Consider K\"ahler classes $c_1,\ldots,c_{n-i-2}$ and define the quadratic form $Q_i$ on $H^{1,1}(X,\R)$ by
\begin{equation}\label{def-Q}
Q_i(M,M'):= L_iMM'c_1\cdots c_{n-i-2} \quad \text{for} \quad M,M'\in H^{1,1}(X,\R)\,.
\end{equation}
Here, the right hand side is a real number since $H^{n,n}(X,\R)$ is canonically identified with $\R$. By the definition of $F_i$, this quadratic form induces a quadratic form on $H^{1,1}(X,\R)/F_i$ that we still denote by $Q_i$.
Consider another K\"ahler class  $c_{n-i-1}$ and the 
primitive subspace\footnote{This is a generalization of the classical primitive subspace
		$\Ker(c^{n-1} \cupp \bullet) \subset H^{1,1}(X,\R)$ for some K\"ahler class $c$. }
$$P_i:=\Set{M\in H^{1,1}(X,\R) |  L_iM c_1\cdots c_{n-i-1}=0 }.$$
Since $L_i$ belongs to $\Kc^i(X)\setminus\{0\}$, we have $L_i c_1\cdots c_{n-i-1}\not =0$. Therefore, by Poincar\'e duality, $P_i$ is a hyperplane of $H^{1,1}(X,\R)$.
It contains $F_i$, hence $P_i/F_i$ is a hyperplane of $H^{1,1}(X,\R)/F_i$.

\begin{lemma} \label{l:HR}
The quadratic form $Q_i$ is negative semi-definite on $P_i$,
hence also on $P_i/F_i$.
Moreover, $Q_i$ is negative semi-definite on $P_{i+1}/F_i$, and hence on $F_{i+1}/F_i$.
\end{lemma}

\proof
If we replace $L_i$ by a product of K\"ahler classes, then by the mixed version of Hodge--Riemann theorem, $Q_i$ is negative definite on $P_i$, see~\cite[Theorem A]{DN} and~\cite{Gromov1}. By definition, $L_i$ is a limit of classes which are products of K\"ahler classes. Thus, by continuity, we obtain the first assertion.

We have seen that
$$P_{i+1} = \Set{M\in H^{1,1}(X,\R) | L_{i+1}M c_1\cdots c_{n-i-2}=0 }.$$
is a hyperplane of $H^{1,1}(X,\R)$.
Recall that $L_{i+1}$ can be approximated by classes $\{L_ic'_k\}_{k \ge 1}$  with $c'_k$ K\"ahler.
Therefore, we can approximate $P_{i+1}$ by
$$P'_k \cnec \Set{M \in H^{1,1}(X,\R) | L_i M c'_k c_1\cdots c_{n-i-2}=0}.$$
Finally, since $Q_i$ is negative semi-definite on $P'_k$,
by continuity, it is negative semi-definite on $P_{i+1}$ as well. 
This implies the second assertion of the lemma because $F_{i+1}$ is contained in $P_{i+1}$.
\endproof

\begin{lemma} \label{l:primitive}
	Let $M$ and $M'$ be classes in $H^{1,1}(X,\R)$
	such that
	$$Q_i(M,M')=Q_i(M',M')=0.$$
	Assume also that  $ L_iM \in \Nef(L_i)\setminus\{0\}$.
	Then there is a vector $(a,b)\in \R^2\setminus \{0\}$, unique up to a multiplicative constant, such that
	$$L_i(aM+bM')c_1\cdots c_{n-i-2}=0\,.$$
	If the subspace spanned by $M$ and $M'$ is a plane, then $\Vect(M,M')\cap P$ is the real line spanned by $aM+bM'$.
\end{lemma}

\proof
Since the classes $c_k$ are K\"ahler, the hypothesis on $ L_iM$ implies that
$$L_iMc_1\cdots c_{n-i-2}c_{n-i-1} \not=0 \quad \text{and} \quad L_iMc_1\cdots c_{n-i-2} \not=0\,.$$
Therefore, we have $M\not\in P$ and $(a,b)$ is unique up to a multiplicative constant.
The lemma is clear when $M$ and $M'$ are collinear.
Assume now that $M$ and $M'$ are not collinear.
Since $\Pi \cnec \Vect(M,M')$ is a plane,
$\Pi\cap P$ is a line. Let $N:=aM+bM'$ be a vector which spans this real line.
It is enough to show that this vector satisfies the identity in the lemma.

By Poincar\'e duality, we only have to show that $Q_i(N,N')=0$ for every $N'\in H^{1,1}(X,\R)$.
Since $N\in \Pi\cap P$, Lemma~\ref{l:HR} implies $Q_i(N,N)\leq 0$.
We also have
$$Q_i(N,N)=a^2Q_i(M,M)\geq 0,$$
because
$L_iM^2 \in \Kc^{i+2}(X)$ by Lemma~\ref{l:Nef-N}(3) and the hypothesis $ L_iM \in \Nef(L_i)$. We deduce that
$Q_i(N,N)=0$, and that $Q_i(M,M)=0$ when $a\not =0$.
In any case, we have $Q_i(N,N')=0$ for $N'\in\Pi$ because $Q_i(M,M')=0$.

Note that $H^{1,1}(X, \R) = \Pi + P$. Now, it suffices to check the identity $Q_i(N,N')=0$ for $N'\in P$.
Since $Q_i(N,N)=0$, using Lemma \ref{l:HR} and Cauchy-Schwarz's inequality, we have for $N'\in P$
$$Q_i(N,N')^2\leq Q_i(N,N)\cdot Q_i(N',N') =0\,.$$
The lemma follows.
\endproof

For $1\leq i\leq n$, define $F_i'$ as the vector subspace of $F_i\subset H^{1,1}(X,\R)$ spanned by the cone
$$C_{i} := \Set{ M \in F_i | L_{i-1}M \in \Nef(L_{i-1})+\Nc^i(X) }.$$
We can deduce from Lemma~\ref{l:codim-1} below that either $C_i=F_{i-1}$ or $C_i$ is a closed half-space of $F_i'$ having $F_{i-1}$ as the boundary.
We have
$$F_{i-1}\subset C_i\subset F_i' \subset F_i$$
since $L_{i-1}F_{i-1}\subset \Nc^i(X)$. Both $C_i$ and $F_i'$ are $G$-invariant. 
In particular, we have a $G$-invariant filtration
 \begin{equation}\label{eqn-filtFF'}
 \cdots \subset F'_{i-1} \subset F_{i-1} \subset F_i' \subset F_{i} \subset \cdots.
 \end{equation}
We also have $F_n'=H^{1,1}(X,\R)$ since $F_n=H^{1,1}(X,\R)$; thus $C_n$ contains the nef cone $\Nef(X)$ of $X$.

\begin{lemma} \label{l:Ci-Ci}
For $1\leq i\leq n$, we have $C_i\cap (-C_i)=F_{i-1}$.
\end{lemma}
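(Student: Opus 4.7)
The plan is to prove the two inclusions separately, leveraging the salience and trivial intersection properties already established in Lemma~\ref{l:Nef-N}.

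For the inclusion $F_{i-1}\subset C_i\cap(-C_i)$, since $F_{i-1}$ is a vector subspace, it suffices to verify $F_{i-1}\subset C_i$. First I would check $F_{i-1}\subset F_i$: for $M\in F_{i-1}$ the class $L_{i-1}M$ lies in $\Nc^i(X)$, and then cupping with $M_i\cdot c_1\cdots c_{n-i-1}$ for arbitrary classes $c_j$ shows that $L_iM\cdot c_1\cdots c_{n-i-1}=L_{i-1}M\cdot M_i c_1\cdots c_{n-i-1}=0$, hence $L_iM\in\Nc^{i+1}(X)$ and $M\in F_i$. Then from $L_{i-1}M\in\Nc^i(X)\subset \Nef(L_{i-1})+\Nc^i(X)$ we conclude $M\in C_i$. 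The same works for $-M$, so the inclusion follows.

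For the reverse inclusion $C_i\cap(-C_i)\subset F_{i-1}$, take $M\in C_i\cap(-C_i)$. By definition of $C_i$ applied to $M$ and to $-M$, there exist decompositions
$$L_{i-1}M=N_1+N_2 \qquad\text{and}\qquad -L_{i-1}M=N_1'+N_2'\,,$$
with $N_1,N_1'\in \Nef(L_{i-1})$ and $N_2,N_2'\in\Nc^i(X)$. Adding these identities gives $N_1+N_1'=-(N_2+N_2')$. The left-hand side lies in the convex cone $\Nef(L_{i-1})$, and the right-hand side lies in the vector subspace $\Nc^i(X)$. By Lemma~\ref{l:Nef-N}(2) we have $\Nef(L_{i-1})\cap \Nc^i(X)=\{0\}$, hence $N_1+N_1'=0$. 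Since $\Nef(L_{i-1})$ is salient by Lemma~\ref{l:Nef-N}(1), the equality $N_1'=-N_1$ forces $N_1=N_1'=0$. Therefore $L_{i-1}M=N_2\in \Nc^i(X)$, which is exactly the condition $M\in F_{i-1}$.

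The argument is entirely formal once one has the two structural inputs (salience of $\Nef(L_{i-1})$ and triviality of its intersection with $\Nc^i(X)$), so I do not anticipate a serious obstacle; the only mild subtlety is confirming $F_{i-1}\subset F_i$, which uses the relation $L_i=L_{i-1}M_i$ from Lemma~\ref{l:quasi-nef}.
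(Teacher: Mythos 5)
Your proof is correct. The forward inclusion is handled exactly as the paper does (the paper records $F_{i-1}\subset C_i$ in the sentence preceding the lemma, via $L_{i-1}F_{i-1}\subset \Nc^i(X)$, and your verification that $F_{i-1}\subset F_i$ using $L_i=L_{i-1}M_i$ is the right, if implicit, point to check). For the reverse inclusion your route differs from the paper's in its mechanics: the paper never writes down the Minkowski decomposition explicitly; instead it observes that any $M\in C_i$ satisfies $L_{i-1}M\,c_1\cdots c_{n-i}\ge 0$ for all K\"ahler classes $c_j$, so that $M\in C_i\cap(-C_i)$ forces these numbers to vanish, and since K\"ahler classes span $H^{1,1}(X,\R)$ this gives $L_{i-1}M\cdot H^{1,1}(X,\R)^{n-i}=0$, i.e.\ $M\in F_{i-1}$. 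You instead add the two decompositions $L_{i-1}M=N_1+N_2$ and $-L_{i-1}M=N_1'+N_2'$ and kill the nef parts by combining the salience of $\Nef(L_{i-1})$ with $\Nef(L_{i-1})\cap\Nc^i(X)=\{0\}$ from Lemma~\ref{l:Nef-N}. The two arguments rest on the same positivity input (both salience and the trivial intersection are themselves proved by pairing against K\"ahler classes), but yours uses Lemma~\ref{l:Nef-N}(1)--(2) as black boxes and is purely convex-geometric, whereas the paper's is a direct sign computation; both are complete, and yours has the minor advantage of not re-deriving the nonnegativity of $L_{i-1}M$ against K\"ahler classes.
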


\proof
Clearly, $F_{i-1}$ is contained in $C_i\cap (-C_i)$.
For $M\in C_i$, there are nef classes $N_k$ and a class $N^{(0)}\in \Nc^i(X)$ such that
\begin{equation*}
L_{i-1}M = \lim_{k\to\infty} L_{i-1}N_k+N^{(0)}.
\end{equation*}
In particular, if $c_1,\ldots,c_{n-i}$ are K\"ahler classes, then $L_{i-1}Mc_1\cdots c_{n-i}\geq 0$.
Assume now that $M\in C_i\cap (-C_i)$.
We obtain that $L_{i-1}Mc_1\cdots c_{n-i}= 0$ for every K\"ahler classes $c_1,\ldots, c_{n-i}$.
Thus, $L_{i-1}M H^{1,1}(X,\R)^{n-i}=0$ and $M\in F_{i-1}$.
\endproof

Recall that $L_i = L_{i-1}M_i$ by the definition of $M_i$. Note also that $\dim F_n'/F_{n-1}=1$ because $F_{n-1}$ is a hyperplane of
$F_n'=H^{1,1}(X,\R)$.
Since $M_n\not\in F_{n-1}$, we see that $F_n' = F_{n-1} \oplus \R \cdot M_n$. 

\begin{lemma} \label{l:codim-1}
Let $1\leq i\leq n$. We have the following properties.
\begin{enumerate}
\item $\dim F_{i}'/F_{i-1}\leq 1$ and $F_i' = \Set{ \ga \in F_i |L_{i-1}\ga^2 \equiv 0}$.
\item The following assertions are equivalent:
\begin{enumerate}
	\item $\dim F'_i/F_{i-1} = 1$;
	\item $F'_i = F_{i-1} \oplus \R \cdot M_i$;
	\item $L_{i-1} M^2_i \equiv 0$; 
	\item $L_{i-1} M^2_i = 0$;
	\item $C_i$ is the closed half-space of $F_i'$ containing $M_i$ and having $F_{i-1}$ as the boundary.
\end{enumerate}
\item For every $\ga \in F_i'$ (resp. $\ga \in C_i\setminus F_{i-1}$), $L_{i-1}\ga$ is proportional (resp. positively proportional) to $L_i$ modulo $\Nc^i(X)$.
\end{enumerate}
\end{lemma}

\begin{proof}

Lemma~\ref{l:codim-1} for $i = n$ follows from 
	$$F_n' = F_{n-1} \oplus \R \cdot M_n = H^{1,1}(X,\R).$$
From now on, we assume that $i \le n-1$.

(1)  Let $Q_{i-1}$ be the quadratic form defined using $L_{i-1}c_1\cdots c_{n-i-1}$ for some K\"ahler classes $c_j$.
By the second assertion of Lemma \ref{l:HR},  if $M'\in F_i$ then $Q_{i-1}(M',M')\leq 0$.
If $M'\in C_{i}$,
we have $Q_{i-1}(M',M')\geq 0$ according to Lemma \ref{l:Nef-N}(4).
Therefore,  we have $Q_{i-1}(M',M')=0$ for $M'\in C_i$ and hence for $M'\in F_i'$ as well
because  $C_i$ spans $F_i'$.
This proves
$$F_i' \subset \Set{ \ga \in F_i |L_{i-1}\ga^2 \equiv 0}.$$
The other inclusion and $\dim F_{i}'/F_{i-1}\leq 1$
follow easily from the following claim.

\medskip\noindent
{\bf Claim.} Let $M'$ be any class in $F_i$ satisfying $Q_{i-1}(M',M')=0$ for all K\"ahler classes $c_1,\ldots, c_{n-i-1}$. Then $M'$ is proportional to $M_i$ modulo $F_{i-1}$.

\begin{proof}[Proof of Claim]
	
Assume to the contrary that $M'$ is not proportional to $M_i$ modulo $F_{i-1}$.
By the definitions of $F_i$ and $M_i$, we have $Q_{i-1}(M_i,M')=0$; we also have $Q_{i-1}(M',M')=0$ by hypothesis.
According to Lemma \ref{l:primitive},
there is a vector $(a,b)\in \R^2\setminus \{0\}$, unique up to a multiplicative constant, such that
\begin{equation} \tag{A}
L_{i-1}(aM_i+bM')c_1\cdots c_{n-i-1}=0.
\end{equation}
Also since $M'$ and $M_i$ span a plane, by the second assertion of Lemma \ref{l:primitive}, this  $(a,b)$ is  the unique (up to a multiplicative constant) solution of the equation
\begin{equation}
\tag{B}
L_{i-1}(aM_i+bM')c_1\cdots  c_{n-i-1}c_{n-i}=0.
\end{equation}
So Equation (B) is equivalent to Equation (A) which can be obtained from (B) by removing the factor
$c_{n-i}$. Since this property holds for arbitrary K\"ahler classes $c_1,\ldots,c_{n-i}$, Equation (B) is equivalent to any equation obtained from (B) by removing a factor $c_j$:
\begin{equation} \tag{A'}
L_{i-1}(aM_i+bM')c_1\cdots c_{j-1}c_{j+1} \ldots c_{n-i-1}c_{n-i}=0.
\end{equation}

We conclude that (A) and (A') are equivalent. In other words, Equation (A) remains equivalent if we replace a factor $c_j$ by any other K\"ahler class. Therefore, if $(a,b)$ is as above, after replacing one by one the factors $c_j$ with arbitrary K\"ahler classes, we have
$$L_{i-1}(aM_i+bM')c_1\cdots c_{n-i-1}=0$$
for all K\"ahler classes  $c_1,\ldots , c_{n-i-1}$.
Since K\"ahler classes span $H^{1,1}(X,\R)$, we obtain
$$L_{i-1}(aM_i+bM') H^{1,1}(X,\R)^{n-i-1}=0\,.$$
Hence $M'$ has to be proportional to $M_i$ modulo $F_{i-1}$.	
\end{proof}
		
(2) The above claim implies that (a) implies (b).
	That (b) implies (c) follows from
	Part (1) of the lemma.
	As $L_{i-1} M_i = L_i \in \Nef(L_{i-1}) \subset \cK^i(X)$,
	it follows from Lemma~\ref{l:Nef-N}(3) that (c) is equivalent to (d).
	We show that (c) implies (a).
	Since $\dim F_{i}'/F_{i-1}\leq 1$, it suffices to show that $F_{i-1} \ne F_i'$.
	Assume that $L_{i-1}M_i^2 \equiv 0$, then $M_i \in F_i$
	because $L_iM_i = L_{i-1}M_i^2 \equiv 0$, and therefore $M_i \in F_i'$
	by Part (1) of the lemma.
	As $L_{i-1}M_i = L_i \not\equiv 0$, we have $M_i \not\in F_{i-1}$. Hence $F_{i-1} \ne F_i'$
	and (c) implies (a).
	It is clear that (e) implies (a). Finally, assuming (a), (b), (c), it is clear that $M_i\in C_i$, $M_i\not\in F_{i-1}$ and hence (e) is true, thanks to Lemma \ref{l:Ci-Ci}.

\smallskip

(3) Recall that $L_i=L_{i-1}M_i$. The assertion is clear when $F_i'=C_i=F_{i-1}$. Otherwise, this is a consequence of (b) and (d) in Part (2).
\end{proof}

\ssec{Polynomial growth of the pullback action: proofs of Theorems~\ref{t:main-1map} and~\ref{thm-filtintro}}\label{ssec-Polg}
\hfill

Let $M_1,\ldots,M_n \in H^{1,1}(X,\R)$ be a quasi-nef sequence with respect to the $G$-action.
Let $F_i$ and $F'_i$ be the filtrations constructed previously.

Let $g \in G$ and let $\go$ be a K\"ahler class.
We have the expansion:
\begin{equation}\label{e:gm-omega}
 (g^m)^*(\go)= \big((g^*-\Id)+\Id\big)^m(\go)= \sum_{0\leq j\leq m} {m\choose j} \go_j \quad \text{with} \quad \go_j:=(g^*-\Id)^j(\go).
\end{equation}
The following proposition combined with Lemma~\ref{l:codim-1} proves
Theorem~\ref{thm-filtintro}.

\begin{proposition} \label{p:decomposition}
	Recall that $g^* : H^{1,1}(X) \cto$ is unipotent.
	Assume that  $g^* \not=\Id$. Then there exist an integer  $1\leq r\leq n-1$ and
	a unique strictly decreasing sequence of integers
	$$n-1\geq s_1>\cdots > s_r \ge  1$$
	depending on $g$ but independent of the K\"ahler class $\go$, such that $\go_{2j-1}\in F_{s_j}\setminus F_{s_j}'$ and $\go_{2j}\in F_{s_j}'\setminus F_{s_j-1}$ for $1\leq j\leq r$, and $\go_{2r+1}=0$. Moreover, we have that $\dim F_{s_j}'/F_{s_j-1}=1$, $\go_{2j}$ is positively proportional to $M_{s_j}$ modulo $F_{s_j-1}$, and
	$F_{s_j}' = F_{s_j-1} \oplus \R \cdot M_{s_j}$.
\end{proposition}

For later use (after Lemma~\ref{l:Sobolev}), in order to recall that the $s_j$
in Proposition~\ref{p:decomposition} depends on $g \in \Aut(X)$, we will sometimes use the notation $s_j(g):=s_j$.
We will also use the notation $\go_1(g) \cnec \go_1$ in the expansion~\eqref{e:gm-omega} from time to time.

Before proving Proposition~\ref{p:decomposition}, we first deduce the upper bounds stated in Theorem \ref{t:main-1map}.

\begin{proof} [Proof of Theorem \ref{t:main-1map} (for the optimality, see \S \ref{ssec-exopt})]
Observe that it is enough to prove this theorem for any positive iterate of $g$ instead of $g$.
Since $g$ has zero entropy,
up to replacing $g$ by some positive iterate,
we can assume by Lemma~\ref{l:Aut_unipotent} that $g^* : H^{1,1}(X) \cto$ is unipotent.
We apply the above study for $G \subset \Aut(X)$ the cyclic group generated by $g$.
We deduce from Proposition~\ref{p:decomposition} that
for $m\geq 2r$,
$$(g^m)^*(\go)=\sum_{0\leq j\leq 2r} {m\choose j} \go_j.$$
Since $r\leq n-1$, we obtain that $\|(g^m)^*(\go)\|\lesssim m^{2(n-1)}$ and hence
$\|(g^m)^*(\go^p)\|\lesssim m^{2p(n-1)}$.
Since $\go^p$ is in the interior of the cone $\Kc^p(X)$, it follows that
(see also \cite[Proposition  5.8]{Dinh-bis})
$$\|(g^m)^*:H^{p,p}(X,\R)\circlearrowleft\| \lesssim  m^{2p(n-1)}.$$

Since $(g^m)^*:H^{p,p}(X,\R)\circlearrowleft$ is dual to $(g^{-m})^*:H^{n-p,n-p}(X,\R)\circlearrowleft$, we deduce that
$$\|(g^m)^*:H^{p,p}(X,\R)\circlearrowleft\| \lesssim  m^{2(n-p)(n-1)}.$$
Thus,
$$\|(g^m)^*:H^{p,p}(X,\R) \circlearrowleft \| \lesssim  m^{2p'(n-1)}$$
with $p' = \min(p,n-p)$.
Finally, we obtain the estimate given in Theorem \ref{t:main-1map} and conclude its proof by using:
$$\|(g^m)^*:H^{p,q}(X,\R)\circlearrowleft\|^2 \leq C \|(g^m)^*:H^{p,p}(X,\R)\circlearrowleft\|\|(g^m)^*:H^{q,q}(X,\R)\circlearrowleft\|$$
for some constant $C>0$ independent of $g$ and $m$, see the proof of \cite[Proposition  5.8]{Dinh-bis}.
\end{proof}

\begin{remark}\label{r:growth_Op}
	\hfill
	
	\noindent (1)
	A well-known result says that if $\|(g^m)^*:H^{1,1}(X,\R) \circlearrowleft\| \sim m^{k}$ when $m \gg 1$, then $k =2r$  is always an \emph{even} integer. Indeed, if $k$ is odd, then for a general K\"ahler class $\go$, we see from the expansion \eqref{e:gm-omega} that the K\"ahler classes $(g^{m})^*(\go)$ and $(g^{-m})^*(\go)$ would be asymptotically opposite. This contradicts the property that the cone $\Nef(X)$ is salient.
	
	\noindent (2) For the other values of $(p,q)$, the estimate can be slightly improved. Indeed, this is clear when $r<n-1$. When $r=n-1$, we have $s_i=n-i$ for every $i$ and  we can take $L_1=M_1=\go_{2r}$. Then, by Lemma \ref{l:codim-1} and Proposition \ref{p:decomposition}, we get
	$\go_{2r}^2\equiv 0$ and $\go_{2r}\go_{2r-1}\equiv 0$.
	Thus the following holds
	whenever $p\geq 2$ and regardless of the value of $r$:
	$$\|(g^m)^*:H^{p,p}(X,\R)\circlearrowleft\| =O(m^{2p(n-1)-2}).$$
\end{remark}

\begin{proof}[Proof of Proposition~\ref{p:decomposition}]

First observe that $g^*=\Id$ on $F_i'/F_{i-1}$ because $g^*$ is unipotent and $\dim F_i'/F_{i-1}\leq 1$ according to Lemma \ref{l:codim-1}. In particular, if $\go_k\in F_i'\setminus F_{i-1}$ then $\go_{p}\not\in F_i'$ for $p<k$ and $\go_{k+1}\in F_{i-1}$.
This remark will be used several times in the proof.

Starting from a  K\"ahler class $\go$,
we first construct by induction the sequence $s_j$ which verifies
the properties in Proposition~\ref{p:decomposition} except its independence of $\go$.
We will show later that $r$ and $s_j$ do not depend on the choice of $\go$.

We have $\go_0=\go\in F_n'=H^{1,1}(X,\R)$ and for simplicity we set $s_0:=n$.
Suppose that $j \ge 1$ and the sequence $(s_k)$ is already constructed for $k$ up to $j-1$, 
so that
$\go_{2i-1}\in F_{s_i}\setminus F_{s_i}'$ and $\go_{2i}\in F_{s_i}'\setminus F_{s_i-1}$ for $i=1,\ldots, j-1$.
For $i = j-1$ (when $j > 1$), this gives
$$\go_{2j-3} \in F_{s_{j-1}}\setminus F_{s_{j-1}}' \ \ \ \text{ and }
 \ \ \ \go_{2j-2}\in F_{s_{j-1}}'\setminus F_{s_{j-1}-1}.$$
If  $\go_{2j-1}=0$ (in particular, when $s_{j-1}=1$) we end the construction and define $r:=j-1$.
Assume now that $\go_{2j-1}\not=0$, so $s_{j-1}>1$.
It is enough to show the existence of $1\leq s_j<s_{j-1}$ such that $\go_{2j-1}\in F_{s_j}\setminus F_{s_j}'$ and $\go_{2j}\in F_{s_j}'\setminus F_{s_j-1}$.

Let $s_j\geq 1$ be the integer such that $\go_{2j-1}\in F_{s_j}\setminus F_{s_j-1}$.
By Lemma \ref{l:codim-1}, the space $F_{s_{j-1}}'/F_{s_{j-1}-1}$ is of dimension 0 or 1. Therefore,
since $\go_{2j-1}=(g^*-\Id)(\go_{2j-2})$ and
$\go_{2j-2}$ belongs to $F_{s_{j-1}}'\setminus F_{s_{j-1}-1}$, we necessarily have $\go_{2j-1} \in F_{s_{j-1}-1}$.
It follows that $s_j < s_{j-1}$.

Let $k\geq 2j-1$ be the largest integer such that $\go_k$ belongs to $F_{s_j}\setminus F_{s_j-1}$. So we have $L_{s_j-1}\go_k \cdot H^{1,1}(X,\R)^{n-s_j}\not=0$.
We only need to show that $\go_k\in F'_{s_j}$ and $k=2j$.
Note that these properties imply that $\go_{2j-1}\not\in F_{s_j}'$ thanks to the remark at the beginning of the proof.

Observe that modulo $F_{s_j-1}$ we have
$$(g^m)^*(\go) = \go_0 +{m\choose 1} \go_1 +\cdots + {m\choose k}\go_k.$$
Therefore, there are classes $N^{(m)}$ in  $L_{s_j -1}F_{s_j-1}\subset \Nc^{s_j}(X)$ such that
\begin{equation*} \label{e:go-k}
L_{s_j-1} \go_k=\lim_{m\to\infty} \((k!) m^{-k} L_{s_j-1} (g^m)^*(\go) + N^{(m)}\).
\end{equation*}
 Lemma \ref{l:Nef-N}(2) implies that $L_{s_j-1} \go_k$ is in $\Nef(L_{s_j-1})+\Nc^{s_j}(X)$. Since $\go_k\in F_{s_j}$, we conclude that
$\go_k$ belongs to $C_{s_j}\subset F'_{s_j}$.
As $\go_k\not\in F_{s_j-1}$,
by Lemma \ref{l:codim-1}(3), $\go_k$ is positively proportional to $M_{s_j}$ in $F'_{s_j}/ F_{s_j-1}$.

Since $\go_k \in F'_{s_j} \bss F_{s_j-1}$, 
Lemma \ref{l:Ci-Ci} implies that $-\go_k$ doesn't belong to $C_{s_j}$.
Since $(g^{-m})^*= \(\Id - (g^*)^{-1} \( g^* - \Id\) \)^m$,
we have
$$(g^{-m})^*(\go) = \go_0 -{m\choose 1} (g^{-1})^*(\go_1) +\cdots + (-1)^k {m\choose k}(g^{-k})^*(\go_k) \in H^{1,1}(X,\R) / F_{s_j-1}$$
which implies
 $$(g^{k-m})^*(\go) = {(g^k)^*(\go_0)} -{m\choose 1} (g^{k-1})^*(\go_1) +\cdots + (-1)^k {m\choose k}\go_k \in H^{1,1}(X,\R) / F_{s_j-1}.$$
Hence for some classes $N^{[m]}$ in  $L_{s_j -1}F_{s_j-1}\subset \Nc^{s_j}(X)$,
\begin{equation*}\label{eqn-limmk}
(-1)^kL_{s_j-1} \go_k=\lim_{m\to\infty} \((k!) m^{-k} L_{s_j-1} (g^{k-m})^*(\go)+N^{[m]}\).
\end{equation*}
It follows as above that $(-1)^k\go_k$ belongs to $C_{s_j}$. Thus, $k$ is even and hence $k\geq 2j$.

To show that $k=2j$, we assume by contradiction that $k\geq 2j+2$. Observe that, for $q\geq 2j-1$ 
we have $\go_q\in F_{s_j}$ by the definition of $s_j$.  
Therefore $L_{s_j-1}M_{s_j} \go_q=L_{s_j}\go_q \in \Nc^{s_j+1}(X)$, 
and hence $L_{s_j-1}\go_k \go_q  \in \Nc^{s_j+1}(X)$ 
(where we recall that $L_{s_j-1}\go_k$ is positively proportional to $L_{s_j-1}M_{s_j}$ modulo $\Nc^{s_j}(X)$). 
In particular, this holds for $q\geq k-2$. We also observe that by the definition of $k$, we have $\go_q \in F_{s_j - 1}$ whenever $q \geq k+1$, so $L_{s_j - 1}\go_q  \in \Nc^{s_j}(X)$.
Thus, we have (using the expansion \eqref{e:gm-omega})
$$\lim_{m\to\infty} \((k-1)!^2m^{-2k+2} L_{s_j-1} (g^m)^*(\go^2)\) = L_{s_j-1}\go_{k-1}^2 \ \  \text{ modulo }   \Nc^{s_j+1}(X).$$
When we multiply by K\"ahler classes,
the left hand side gives a non-negative number while the right hand side
gives a non-positive number due to Lemma \ref{l:HR} (because $\go_{k-1} \in F_{s_j}$).
We then obtain that $L_{s_j-1}\go_{k-1}^2=0$ modulo $\Nc^{s_j+1}(X)$. 
This contradicts Lemma~\ref{l:codim-1}(1) and the fact that
$\go_{k-1}\in F_{s_j}\setminus F_{s_j}'$, 
the latter due to $\go_k \in F'_{s_j} \bss F_{s_j-1}$
	and the remark at the beginning of the proof. 
Thus $k=2j$ as desired.
Finally, the fact that $\omega_k$ is positively proportional to $M_{s_j}$ in $F'_{s_j}/F_{s_j-1}$
together with Lemma~\ref{l:codim-1}(2)
shows that $s_j$ satisfies the "moreover part" of Proposition~\ref{p:decomposition}.

We prove now that the sequence $s_j$ doesn't depend on the choice of $\go$.
Let $\go'$ be another K\"ahler class. 
We have
$\go\lesssim \go'\lesssim \go$
which implies that
$$ L_i (g^m)^*(\go) \lesssim L_i (g^m)^*(\go') \lesssim L_i (g^m)^*(\go)\, .$$
It follows from the polynomial growth (when $m \to \infty$) of the expansion
$$L_{i}c_1\cdots c_{n-i - 1}(g^m)^*(\go)=\sum_{0\leq j\leq 2r} {m\choose j} L_{i}c_1\cdots c_{n-i - 1} \go_j
\ \ \ \ \ \ (c_1, \ldots, c_{n-i - 1} \text{ K\"ahler}) $$
and the similar one for $\go'$
that $k$ is the smallest integer such that $L_i\go_k \in \cN^{i + 1}(X)$
if and only if $k$ is the smallest integer such that $L_i\go'_k \in \cN^{i + 1}(X)$.
So for any integer $k$,  $\go_k \in F_i$ if and only if $\go'_k \in F_i$,
which shows that $s_j$ is independent of $\go$.

Finally, the uniqueness of the sequence $(s_k)$
is clear from the property $\go_{2j-1}\in F_{s_j}\setminus F_{s_j}'$
and~\eqref{eqn-filtFF'}.
This completes the proof of Proposition \ref{p:decomposition}
and also that of Theorem~\ref{thm-filtintro}.
\end{proof}

\ssec{Essential derived length: proof of Theorem~\ref{t:main_1}}  
\hfill

Thanks to Lemmas \ref{l:unipotent_property} and \ref{l:Aut_unipotent}, in order to prove
Theorem~\ref{t:main_1}, we may and will replace $G$ by a suitable finite-index subgroup and assume that the properties (1) and (2) in  Lemma \ref{l:Aut_unipotent} hold for $G$ instead of $G'$.

From now on, we fix some classes $L_i$ with $0 \le i \le n$ and
$M_i$ with $1 \le i \le n$ as in Lemma \ref{l:quasi-nef}.
Define the group $H_i$ by
\begin{eqnarray*}
	H_i &:=& \big\{g\in G \quad : \quad g^*=\Id\,\, {\rm on}\,\, L_i\cdot H^{1,1}(X,\R)/\Nc^{i+1}(X)\big\} \\
	&=& \big\{g\in G \quad : \quad  L_i\cdot (g^*-\Id) H^{1,1}(X,\R) \subset\Nc^{i+1}(X)\big\},
\end{eqnarray*}
where the quotient in the first display is understood as the quotient of $L_i\cdot H^{1,1}(X,\R)$ by its intersection with $\Nc^{i+1}(X)$.
Equivalently,
\begin{eqnarray} \label{e:Hi-Fi}
H_i &=& \big\{g\in G \quad : \quad (g^*-\Id)H^{1,1}(X,\R)\subset F_i \big\} \\
&=& \big\{g\in G \quad : \quad g^*=\Id \ \ \text{on} \ \ H^{1,1}(X,\R)/F_i \big\}. \nonumber
\end{eqnarray}
Since $F_{i+1} \supset F_i$,  the sequence $H_i$ is increasing,
$H_i$ is normal in $G$ and hence in $H_j$ for $i \le j$. 
Moreover, $G/H_i$ acts faithfully on  $L_i\cdot H^{1,1}(X,\R)/\Nc^{i+1}(X)$.
Also $H_0=G_0$, because Lemma \ref{l:Aut_unipotent} holds for $G$ instead of $G'$. So
if $H_0=G$ or equivalently if $G$ acts trivially on $H^{1,1}(X,\R)$,
then Theorem~\ref{t:main_1} holds obviously.

\begin{lemma} \label{l:max-length}
	Assume that $H_0\not=G$. Then there is an integer $0\leq l\leq n-2$ such that  $H_l\not= G$ and  $H_{l+1}=G$.
\end{lemma}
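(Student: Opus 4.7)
The strategy is to show that $H_{n-1} = G$, and then use the hypothesis $H_0 \ne G$ together with the fact that $(H_i)$ is an increasing chain of subgroups of $G$ to extract the desired index $l$. Once $H_{n-1} = G$, the subsequence $H_0 \subsetneq G$, $H_{n-1} = G$ forces some first index $l+1 \in \{1,\ldots,n-1\}$ at which $H_{l+1} = G$ while $H_l \ne G$, i.e.\ $0 \le l \le n-2$, which is precisely the conclusion.

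The content therefore lies in verifying $H_{n-1} = G$. I would first unpack the definitions to simplify. By definition,
\[
\Nc^n(X) = \bigl\{c \in H^{n,n}(X,\R) \, : \, c\cdot H^{1,1}(X,\R)^{0} = 0 \bigr\} = \{0\},
\]
so the quotient by $\Nc^n(X)$ is trivial, and
\[
H_{n-1} = \bigl\{g \in G \, : \, g^{*}=\Id \text{ on } L_{n-1}\cdot H^{1,1}(X,\R)\bigr\}.
\]

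Next I would identify the subspace $L_{n-1}\cdot H^{1,1}(X,\R)$. It sits inside $H^{n,n}(X,\R)$, which is canonically isomorphic to $\R$ via integration over $X$. So either $L_{n-1}\cdot H^{1,1}(X,\R) = 0$ or $= \R$. The first alternative would mean $L_{n-1} \in \Nc^{n-1}(X)$; but by Lemma \ref{l:quasi-nef} we have $L_{n-1} \in \Kc^{n-1}(X)\setminus\{0\}$, and Lemma \ref{l:Nef-N}(2) (applied with $i+1 = n-1$) gives $\Kc^{n-1}(X)\cap \Nc^{n-1}(X) = \{0\}$. Hence $L_{n-1}\cdot H^{1,1}(X,\R) = \R$.

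Finally, every automorphism of a compact complex manifold acts as the identity on $H^{2n}(X,\R) \cong \R$, since $g$ preserves the fundamental class. Consequently every $g \in G$ fixes $L_{n-1}\cdot H^{1,1}(X,\R)$ pointwise, so $H_{n-1} = G$. This is essentially a pigeonhole argument after the definitional cleanup, so no real obstacle is expected; the only point that requires a moment's attention is invoking Lemma \ref{l:Nef-N}(2) at the correct index to rule out $L_{n-1}\cdot H^{1,1}(X,\R)=0$.
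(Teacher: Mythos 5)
Your proof is correct and follows essentially the same route as the paper, which simply notes that $\Aut(X)$ acts trivially on $H^{n,n}(X,\R)$, hence $H_{n-1}=G$, and then extracts $l$ from the increasing chain exactly as you do. Your extra step ruling out $L_{n-1}\cdot H^{1,1}(X,\R)=0$ is harmless but unnecessary, since $H_{n-1}=G$ would hold trivially in that case as well.
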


\proof
Since $G$ acts trivially on $H^{n,n}(X,\R)$,
$H_{n-1}=G$. The lemma follows easily.
\endproof

For $1\leq j\leq n-1$, we define
$$W_j \cnec \Set{\ga \in F_j |
	\begin{array}{l}
	\text{there is a class } \gb \in H^{1,1}(X,\R) \text{ such that } \\
L_{j-1} (M_j\gb +\ga^2)c_1\cdots c_{n-j-1}\geq 0
\text{ for all K\"ahler classes } c_i
\end{array}}.$$

Note that we obtain the same space if we assume that $\gb$ is K\"ahler, 
because when the inequality in the definition of $W_j$ holds for $\gb$ it holds for all $\gb'$ such that $\gb'-\gb$ is nef.
We call $W_j$ a \emph{primitive root space}\footnote{$\ga$ can be regarded as a square root of a class which is bounded below by $-M_j\gb$ in some sense.}.

\begin{lem}
For $1\leq j\leq n-1$, $W_j$ is a $G$-invariant vector space containing $F_j'$.
\end{lem}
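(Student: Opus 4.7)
The plan is to verify the three claims — containment of $F_j'$, vector-space structure, and $G$-invariance — in that order, with the vector-space property being the only one requiring any real work.

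\textbf{Containment of $F_j'$.} For any $c \in F_j'$, Lemma \ref{l:codim-1} (characterising $F_i'$ exactly as the classes with $L_{i-1}c^2 \in \Nc^{i+1}(X)$) yields $L_{j-1}c^2 \cdot c_1 \cdots c_{n-j-1} = 0$ for every Kähler choice of $c_i$. Pick any Kähler class $\omega$. Then $L_j \omega \in \Nef(L_j)\setminus\{0\}$ and
$$ L_{j-1}(M_j\omega + c^2)\,c_1\cdots c_{n-j-1} \;=\; L_j\omega\cdot c_1\cdots c_{n-j-1} \;\ge\; 0,$$
so $c \in W_j$ with witness $\omega$.

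\textbf{Vector-space structure.} Scalar multiplication is immediate: if $c \in W_j$ is witnessed by $\omega$, then $\lambda c$ is witnessed by $\lambda^2\omega$ when $\lambda\neq 0$, and the zero class is witnessed by any Kähler $\omega$. For addition, let $c, c' \in W_j$ be witnessed by $\omega, \omega'$. Consider the quadratic form $Q(M,M') := L_{j-1}MM' c_1\cdots c_{n-j-1}$; by Lemma \ref{l:HR} (applied with $i = j-1$), $-Q$ is positive semi-definite on $F_j$, and the definitions of $W_j$ and $F_j$ give
$$ -Q(c,c) \;\le\; L_j\omega\cdot c_1\cdots c_{n-j-1},\qquad -Q(c',c') \;\le\; L_j\omega'\cdot c_1\cdots c_{n-j-1}.$$
Cauchy--Schwarz for $-Q$ together with the AM--GM inequality yields $|2Q(c,c')| \le -Q(c,c) -Q(c',c')$, whence
$$ -Q(c+c',c+c') \;\le\; 2(-Q(c,c)) + 2(-Q(c',c')) \;\le\; L_j(2\omega + 2\omega')\cdot c_1\cdots c_{n-j-1}.$$
Setting $\omega'' := 2\omega + 2\omega'$, we have $L_j\omega'' \in \Nef(L_j)\setminus\{0\}$ because $\Nef(L_j)$ is a salient convex cone by Lemma \ref{l:Nef-N}(1), and $c+c' \in F_j$ since $F_j$ is a vector space. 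Thus $c + c' \in W_j$ with witness $\omega''$.

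\textbf{$G$-invariance.} Given $c \in W_j$ with witness $\omega$, we show that $g^*c$ is in $W_j$ with witness $g^*\omega$. The class $g^*c$ still lies in $F_j$ since $F_j = \{M : L_j M \in \Nc^{j+1}(X)\}$ is $G$-invariant ($L_j$ is $G$-invariant and $\Nc^{j+1}(X)$ is $\Aut(X)$-invariant). Because $g^*L_j = L_j$, we have $L_j\, g^*\omega = g^*(L_j\omega) \in \Nef(L_j)\setminus\{0\}$, since $\Nef(L_j)$ is $G$-invariant. Finally, applying $(g^{-1})^*$ (which acts trivially on $H^{n,n}(X,\R) \simeq \R$) and using $g^*L_{j-1} = L_{j-1}$ together with $L_{j-1}\cdot g^*M_j = L_{j-1}M_j$ (which follows from the $G$-invariance of $L_j = L_{j-1}M_j$), one gets
$$ L_{j-1}(M_j\, g^*\omega + (g^*c)^2)\, c_1\cdots c_{n-j-1} \;=\; L_{j-1}(M_j\omega + c^2)\,(g^{-1})^*c_1\cdots (g^{-1})^*c_{n-j-1} \;\ge\; 0,$$
since each $(g^{-1})^*c_i$ is again Kähler.

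The only genuinely delicate point is the addition step, where the Hodge--Riemann type negativity of Lemma \ref{l:HR} is used through Cauchy--Schwarz; the other two parts are book-keeping with the already established invariance properties of $L_{j-1}$, $L_j$, $F_j$, and $\Nef(L_j)$.
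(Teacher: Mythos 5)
Your proof is correct and follows essentially the same route as the paper: the only substantive step is closure under addition, which both arguments reduce to the negative semi-definiteness of $Q$ on $F_j$ from Lemma \ref{l:HR} (the paper phrases this as concavity of $v\mapsto Q(v,v)$, you as Cauchy--Schwarz plus AM--GM, which give the identical inequality $-Q(c+c',c+c')\le 2(-Q(c,c))+2(-Q(c',c'))$ with witness $2\omega+2\omega'$). The containment of $F_j'$ and the $G$-invariance, which the paper dismisses as clear, are spelled out correctly in your write-up.
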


\begin{proof}
	 It is clear that $W_j$ is invariant by $G$ and $W_j \supset F_j'$, see Lemma \ref{l:codim-1}.
	 Moreover, $W_j$ is stable under scalar multiplications, so we only need to show that $W_j + W_j \subset W_j$. Fix K\"ahler classes $c_1,\ldots, c_{n-j-1}$ and let $Q_{j-1}$ be the quadratic form~\eqref{def-Q} defined using $L_{j-1}c_1\cdots c_{n-j-1}$. By Lemma~\ref{l:HR}, $Q_{j-1}$ is negative semi-definite on $F_{j}$, so the function $F_{j} \ni v \mapsto Q_{j-1}(v,v)$ is concave.
	 In particular, for every $\ga_1,\ga_2 \in W_j$,
\begin{eqnarray*}
Q_{j-1}(\ga_1+\ga_2,\ga_1+\ga_2) &\ge & \frac{1}{2}\(Q_{j-1}(2\ga_1,2\ga_1) +Q_{j-1}(2\ga_2,2\ga_2)\) \\
&= & 2(Q_{j-1}(\ga_1,\ga_1) +Q_{j-1}(\ga_2,\ga_2)).
\end{eqnarray*}
It follows that $\ga_1+\ga_2\in W_j$.
\end{proof}

\begin{lemma} \label{l:Sobolev}
Let $\go$ be a K\"ahler class. Then, the class
$$\go_{2j-1} = (g^* - \Id)^{2j-1}(\go)$$
belongs to $W_{s_j}$ for every $1\leq j\leq r$.
\end{lemma}

\proof
We have for every K\"ahler classes $c_i$
$$m^{-4j+2}L_{s_j-1} (g^m)^*(\go^2)c_1\cdots c_{n-s_j-1}\geq 0.$$
By Proposition~\ref{p:decomposition}, $L_{s_j-1} \go_p H^{1,1}(X,\R)^{n-s_j}=0$ whenever $p \ge 2j+1$.
Observe also that
$$L_{s_j-1} \go_{2j} \go_q H^{1,1}(X,\R)^{n-s_j-1} = L_{s_j} \go_q H^{1,1}(X,\R)^{n-s_j-1} =0$$
whenever $q \ge 2j-1$ because $\go_{2j}$ is proportional to $M_{s_j}$ modulo $F_{s_j - 1}$
by Proposition~\ref{p:decomposition}.
Therefore, using the expansion \eqref{e:gm-omega}, we get from the last inequality
$$m^{-4j+2}L_{s_j-1} \bigg(2\binom{m}{2j}\binom{m}{2j-2}\go_{2j}\go_{2j-2}+\binom{m}{2j-1}^2\go_{2j-1}^2\bigg) c_1\cdots c_{n-s_j-1}+ o(1)\geq 0.$$
{Multiplying the last line by $(2j)!(2j-1)!$
and taking $m\to\infty$ give 
$$L_{s_j-1} \big((4j-2)\go_{2j}\go_{2j-2}+ (2j)\go_{2j-1}^2\big) c_1\cdots c_{n-s_j-1}\geq 0.$$
By Proposition~\ref{p:decomposition}, $\go_{2j}$ is positively proportional to $M_{s_j}$ modulo $F_{s_j - 1}$.
	Therefore, the last inequality implies that
$\go_{2j-1} \in W_{s_j}$.
\endproof

\begin{proof}[End of the proof of Theorem~\ref{t:main_1}]
	We can assume that $H_0\not=G$ as in Lemma \ref{l:max-length}.
	Take an integer $l$ as in that lemma. Since $l\leq n-2$, it is enough to show that
	$\ell(G/G_0)\leq l+1$.
	
	Since $H_0=G_0$ and $H_{l+1}=G$, in order to complete the proof, it is enough to show that $H_{i+1}/H_i$ is commutative.
	For this purpose, we can assume that $H_{i+1}\not=H_i$. Observe also that $H_{i+1}/H_i$ acts faithfully on $L_iH^{1,1}(X, \R)/\Nc^{i+1}(X)$
	and on $H^{1,1}(X,\R)/F_i$, see \eqref{e:Hi-Fi}. We continue to use the expansion \eqref{e:gm-omega}.
	By \eqref{e:Hi-Fi} and Proposition \ref{p:decomposition}, $h$ belongs to $H_i$ if and only if $\go_1(h)\in F_i$ for every K\"ahler class $\go$, or equivalently, $s_1(h)\leq i$. Therefore,
	for any $g \in H_{i+1}\setminus H_i$, we have $s_1(g)=i+1$ and, since $\go_1(g)\not\in F_{i+1}'$ by Proposition \ref{p:decomposition}, $H_{i+1}/H_i$ acts faithfully on $H^{1,1}(X,\R)/F_{i+1}'$.
	
	The following lemma is obvious.
	\begin{lemma} \label{l:commutative}
		Let $V$ be a vector space over an arbitrary field. Let $\Gamma$ be a subset of $\GL(V)$.
		Assume there is a vector subspace $W$ of $V$ invariant by $\Gamma$ such that $\Gamma$ acts trivially on $W$ and on $V/W$.
		Then $\Gamma$ is a commutative set.
	\end{lemma}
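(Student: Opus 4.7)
The plan is quite short since the statement reduces to a one-line algebraic identity. For every $g\in\Gamma$, consider the linear operator $g-\Id\in\End(V)$. The hypothesis that $\Gamma$ acts trivially on $V/W$ means that $(g-\Id)(V)\subseteq W$, and the hypothesis that $\Gamma$ acts trivially on $W$ means that $(g-\Id)(W)=0$. Composing these two observations yields, for any $g,h\in\Gamma$, the key vanishing
\[
(g-\Id)(h-\Id)=0\quad\text{in } \End(V),
\]
because $(h-\Id)$ lands in $W$ and $(g-\Id)$ kills $W$.

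Expanding this identity gives $gh-g-h+\Id=0$, i.e.\ $gh=g+h-\Id$ as endomorphisms of $V$. The right-hand side is manifestly symmetric in $g$ and $h$, so $gh=hg$, which is what we needed. Note that one does not need $\Gamma$ to be a group here, nor any hypothesis on the field; only the linearity of the elements and the two invariance/triviality conditions are used. The step I expect to be the only substantive point is the composition argument giving $(g-\Id)(h-\Id)=0$; everything else is formal expansion.
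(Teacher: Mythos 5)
Your proof is correct and rests on exactly the same mechanism as the paper's: each $g-\Id$ maps $V$ into $W$ and annihilates $W$, hence $(g-\Id)(h-\Id)=0$. The paper packages this as a pointwise verification that the commutator $lhl^{-1}h^{-1}$ fixes every vector, whereas you expand the operator identity to get $gh=g+h-\Id$; your version is marginally cleaner in that it never invokes inverses, but the content is identical.
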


	By Lemmas~\ref{l:Sobolev} and Lemma~\ref{l:commutative}, it is enough to show that $g$ acts trivially on $W_{i+1}/F_{i+1}'$.
	Assume by contradiction that the last property is wrong. Then since
	$$(g^* - \Id)^2(H^{1,1}(X,\R)) \subset  F'_{s_1(g)} = F'_{i+1},$$
	there are classes $\ga$ and $M'$ in $W_{i+1}\setminus F_{i+1}'$ such that
	$(g^m)^*(\ga)=\ga+mM'$ modulo $F_{i+1}'$.
	Choose a class $\gb$ as in the definition of $W_{i+1}$.
	As we observed above, we can assume that $\gb$ is a K\"ahler class $\go$.
	Using that definition, the expansion \eqref{e:gm-omega}, Lemma~\ref{l:HR}, and Proposition \ref{p:decomposition}, we have for every K\"ahler classes $c_j$
	$$0 \le  -L_i (g^m)^*(\ga^2)c_1\cdots c_{n-i-2}\leq L_i M_{i+1} (g^m)^*(\go) c_1\cdots c_{n-i-2} = L_i  M_{i+1} \go_0  c_1\cdots c_{n-i-2}.$$
	Here, since  $L_i  M_{i+1}=L_{i+1}$ and $\go_j\in F_{i+1}$ for $j\geq 1$ (because $s_1=i+1$), these $\go_j$ do not contribute for last equality. Taking $m\to\infty$, we see
	that $L_i M'^2c_1\cdots c_{n-i-2}=0$.
	We have used here Lemma \ref{l:codim-1}(3) which implies that $L_iW_{i+1}F'_{i+1}\subset \Nc^{i+2}(X)$.	
	So $L_i M'^2=0$ modulo $\Nc^{i+2}(X)$. According to Lemma \ref{l:codim-1}, we have $M'\in F_{i+1}'$ which is a contradiction.
	This completes the proof of Theorem~\ref{t:main_1}.
\end{proof}

\section{Explicit examples} \label{s:examples}

Let $n \ge 1$ be an integer and let $E = \C /\gL$ be an elliptic curve.

\ssec{Torus examples for the optimality of the upper bound in Theorem~\ref{t:main-1map}}\label{ssec-exopt}
\hfill

Let $X \cnec E^n = \C^n /\gL^n$ with coordinates $z_1,\ldots,z_n$.
Define the automorphism $g : X \to X$ by
$$g(z_1,\ldots,z_n) = (z_1, z_1 + z_2, \ldots, z_{n-1} + z_n).$$
With respect to the bases $dz_1,\ldots,dz_n$ and $d\bar{z}_1,\ldots,d\bar{z}_n$ of $H^{1,0}(X)$ and $H^{0,1}(X)$ respectively,
both  $g^* : H^{1,0}(X) \to H^{1,0}(X)$ and
$g^* : H^{0,1}(X) \to H^{0,1}(X)$ correspond to the Jordan matrix
$$ J_n =
\begin{bmatrix}
1 &1          &            & \\
& \ddots & \ddots &     \\
&            & \ddots &  1 \\
& &   &   1
\end{bmatrix}.
$$
Since $g^* : H^{1,1}(X) \to H^{1,1}(X)$ is isomorphic to
the tensor product of $g^* : H^{1,0}(X) \to H^{1,0}(X)$
with $g^* : H^{0,1}(X) \to H^{0,1}(X)$,
the corresponding matrix of $g^* : H^{1,1}(X) \to H^{1,1}(X)$
is the Kronecker product (see e.g. \cite{Horn})
$$J_n \otimes J_n =
\begin{bmatrix}
J_n & J_n          &            & \\
& \ddots & \ddots &     \\
&            & \ddots &  J_n \\
& &   &   J_n
\end{bmatrix}.
$$
In particular, $g^* : H^{1,1}(X) \to H^{1,1}(X)$ is unipotent.
Set $N_n \cnec J_n - \Id_n$,
$$ N \cnec
\begin{bmatrix}
N_n &   0        &            & \\
& \ddots & \ddots &     \\
&            & \ddots &  0 \\
& &   &   N_n
\end{bmatrix} \ \ \text{ and } \ \ J \cnec
\begin{bmatrix}
0 &   J_n        &            & \\
& \ddots & \ddots &     \\
&            & \ddots &  J_n \\
& &   &   0
\end{bmatrix}.
$$
The linear map $(g^* - \Id) : H^{1,1}(X) \to H^{1,1}(X)$ is represented by the matrix $N + J$. We have $N^k = J^k = 0$ for every $k \ge n$ and
$$  N^{n-1} =
\begin{bmatrix}
N^{n-1}_n &   0        &            & \\
& \ddots & \ddots &     \\
&            & \ddots &  0 \\
& &   &   N^{n-1}_n
\end{bmatrix} \ \ \text{ and } \ \ J^{n-1} =
\begin{bmatrix}
0 &   \dots      &      0      & J^{n-1}_n   \\
& \ddots &  &  0   \\
&            & \ddots & \vdots \\
& &   &   0
\end{bmatrix}.
$$
Note that the multiplication of the first row of $N^{n-1}$ with the last column of $J^{n-1}$ is not zero. 
Therefore, since $NJ=JN$, we have
$$(N + J)^{2n-2} = \sum_{k = 0}^{2n-2} \binom{2n-2}{k}N^kJ^{2n-2-k} = \binom{2n-2}{n-1}N^{n-1}J^{n-1} \ne 0.$$
This example shows that
$$\|(g^m)^*: H^2(X,\C)\circlearrowleft\| \sim_{m \to \infty} Cm^{2(n-1)}$$
for some $C > 0$, thus the upper bound of the polynomial growth in
Theorem~\ref{t:main-1map} is optimal in terms of $n$.

\begin{remark}
More generally, we have
$$\|(g^m)^*:H^{p,q}(X,\R)\circlearrowleft\| \sim_{m \to \infty} Cm^{p(n-p) + q(n-q)}.$$ 
Thus the estimate in Theorem~\ref{t:main-1map} is optimal for $p,q\in \{0,1,n-1,n\}$.
We omit the proof.
\end{remark}

\ssec{Some explicit examples related to Conjecture~\ref{conj:main}}\label{ssec-exrelated}

\sssec{Torus examples}
\hfill

Denote by $U(n, \Z)$ the group of $n \times n$ upper triangular matrices whose entries are integers and whose diagonal entries are $1$. Let $E_{ij}$ be the $n\times n$-matrix whose $(i, j)$-entry is $1$ and other entries are $0$. It is well-known and easy to see directly that $U(n, \Z)$ is generated by
$$\tau_{ij} := I_n + E_{ij} \quad \text{with} \quad 1\le i<j\leq n\, ,$$
and $U(n, \Z)$ has nilpotency class $n-1$.

The group $G \cnec U(n, \Z)$ acts on $X = E^n$,
and the induced action on $H^2(E^{n}, \Z) = \wedge^{2} H^1(E^{n}, \Z)$ is
faithful and unipotent.
Hence, $G \subset \Aut(X)$ is a zero entropy subgroup.
We have $G \cap \Aut^0 (X) = \{1\}$ because $U(n, \Z)$ acts on $H^2(E^n, \Z)$ faithfully.
Thus, by Lemma \ref{l:unipotent_property}, we have
$$c_{{\rm ess}}(G, X) = c (U(n, \Z)) = n-1 = \dim X - 1.$$

\sssec{Some examples with positive Kodaira dimension}
\hfill

Let $n \ge 2$ and $\kappa$ be integers such that $1 \le \kappa \le n-1$.
Fix a smooth projective variety $B$ of dimension $\kappa$ having an ample canonical divisor $K_B$ 
(hence $\gk(B) = \dim B=\kappa$) 
such that there is a surjective morphism $\rho : B \to E$. One may find such a $B$ as a general member of a very ample linear system of $V := E^{\kappa + 1}$ and $\rho$ the projection to the first factor of $E^{\kappa+1}$.
Indeed, by the adjunction formula, $K_B = (K_V + B)|_B \sim B|_B$ is the restriction of an ample divisor, whence it is ample.

Consider the product
$X := E^{n-\kappa} \times B$. 
This is a smooth projective variety of dimension $n \ge 2$. 
By~\cite[Theorem 15.1]{Ueno},
	$$\gk(X) = \gk(B) = \dim B = \gk.$$}
The group $U(n-\kappa, \Z)$ acts faithfully on $X$ by
$$U(n-\kappa, \Z) = U(n-\kappa, \Z) \times \{\Id_B\} \,\, \subset\,\, \Aut((E, 0)^{n-\kappa}) \times \Aut(B) \,\, \subset\,\, \Aut(E^{n-\kappa} \times B)\, .$$
For each $i$ with $1 \le i \le n-\kappa$, we define $\rho_{i} \in \Aut (X)$ by
$$E^{n-\kappa} \times B \ni (z_1, \ldots, z_i, \ldots, z_{n-\kappa}, w) \mapsto (z_1, \ldots, z_i+ \rho(w), \ldots, z_{n-\kappa}, w) \in E^{n-\kappa} \times B\, .$$
Then $\rho_i \in \Aut (X/B)$ with respect to the canonical projection onto $B$.
Denote by $A$ the abelian subgroup of $\Aut(X)$ generated by the $\rho_i$ for $1\leq i\leq n-\kappa$.
Finally, define $G$ to be the subgroup of $\Aut(X)$ generated by $A$ and $U(n-\kappa,\Z)$.

Since $A$ (resp. $U(n-\kappa, \Z)$) acts on each fiber $E^{n-\kappa}$ as a translation (resp. zero entropy) group,
by the relative dynamical degree formula in \cite[Theorem 1.1]{DNT}, the action of $G$ on $X$ is of zero entropy. 
This can also be obtained using the definition of entropy and the fact that for $g\in G$ the Lipschitz norm of $g^m$ is bounded by a polynomial in $m$.

\begin{proposition} \label{prop52}
Let $X$ be as above. Then, $G_0=\{1\}$  and
	$$c_{{\rm ess}}(G, X) = \dim X - \kappa (X).$$
\end{proposition}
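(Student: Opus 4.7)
The plan splits into three steps: first identify $\Aut^0(X)$ and verify $G_0 = \{1\}$, then recognise $G$ as a semidirect product whose structure is dictated by the standard representation of $U(n-\kappa,\Z)$, and finally identify this with a larger unipotent matrix group whose nilpotency class is known.

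First, since $B$ is of general type, $H^0(B, T_B) = 0$, so a K\"unneth computation of holomorphic vector fields on the product gives $H^0(X, T_X) = H^0(E^{n-\kappa}, T_{E^{n-\kappa}}) \cong \C^{n-\kappa}$. Therefore $\Aut^0(X) = E^{n-\kappa}$, acting by translations on the first factor and trivially on $B$. To show $G_0 = G \cap \Aut^0(X) = \{1\}$, I would write a general element of $G$ as $u \cdot a$ with $u \in U(n-\kappa,\Z)$ and $a = \sum n_i \rho_i \in A$. Its action is $(P,Q) \mapsto (u(P) + (n_1\rho(Q),\ldots,n_{n-\kappa}\rho(Q)),Q)$. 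For this to lie in $\Aut^0(X)$ the $u$-part must be a group automorphism of $E^{n-\kappa}$ that is a constant translation, forcing $u = \Id$; and the $Q$-dependent translation must be constant, forcing all $n_i = 0$ since $\rho\colon B \to E$ is non-constant. Thus $G_0 = \{1\}$.

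Next, I would analyse the abstract group structure of $G$. Since $A$ acts by translations in the fiber and $U(n-\kappa,\Z)$ by fiber-constant group automorphisms, the same formula above shows that $U(n-\kappa,\Z) \cap A = \{1\}$ and that $A \cong \Z^{n-\kappa}$ freely generated by $\rho_1,\ldots,\rho_{n-\kappa}$ (using again that $\rho$ is non-constant). A direct computation of the conjugation $\tau_{ij} \rho_k \tau_{ij}^{-1}$ on the fiber gives $\rho_k$ when $j \neq k$ and $\rho_i + \rho_k$ when $j = k$ (additive notation in $A$). This is exactly the standard action of $U(n-\kappa,\Z)$ on $\Z^{n-\kappa}$, so $G = U(n-\kappa,\Z) \ltimes \Z^{n-\kappa}$ for the tautological representation.

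Finally, the standard affine embedding
\[ \begin{pmatrix} M & v \\ 0 & 1 \end{pmatrix}, \qquad M \in U(n-\kappa,\Z),\ v \in \Z^{n-\kappa}, \]
gives a group isomorphism $G \cong U(n-\kappa+1,\Z)$. The nilpotency class of $U(m,\Z)$ equals $m-1$, hence $c(G) = n-\kappa$. Since $G$ is already unipotent, the image $\rho_{1,1}(G) \le \GL(H^{1,1}(X,\R))$ consists of unipotent elements, and because $G_0 = \{1\}$ we may take $G' = G$ in Lemma~\ref{l:Aut_unipotent}; then $G'/G'_0 = G$ and $c_{\ess}(G,X) = c(G/G_0) = c(G) = n - \kappa$, as claimed. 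The only obstacle I anticipate is keeping track of the semidirect-product conjugation carefully enough to recognise the standard representation, but once that computation is in hand the remaining identification with $U(n-\kappa+1,\Z)$ is routine.
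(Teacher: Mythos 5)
Your proof is correct and follows essentially the same route as the paper's: identify $\Aut^0(X)\cong E^{n-\kappa}$, deduce $G_0=\{1\}$ from the non-constancy of $\rho$ (the paper phrases this by comparing the action on two very general fibers, you by letting $Q$ vary — same point), and then identify $G$ with $\Z^{n-\kappa}\rtimes U(n-\kappa,\Z)\cong U(n-\kappa+1,\Z)$ via the standard affine embedding. The only cosmetic differences are that you compute $\Aut^0(X)$ by a K\"unneth argument on vector fields where the paper cites Brion, and you make the semidirect-product conjugation explicit where the paper leaves it implicit; also note that the final step needs only that $G/G_0=G$ is abstractly unipotent (so the definition of $c_{\ess}$ applies with $G'=G$), rather than the full conclusion of Lemma~\ref{l:Aut_unipotent}.
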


\begin{proof}
	
Using Blanchard's lemma~\cite[Cor.\,2.3]{Br11}, for $X = E^{n-\kappa} \times B$,  we obtain the following natural isomorphisms
$$\Aut^0 (X) \cong \Aut^0(E^{n-\kappa}) \times \Aut^0(B) \cong E^{n-\kappa} \times \{\Id_B \} \cong E^{n-\kappa} .$$
Here, we also use the assumption  that $B$ is of general type and hence $\Aut(B)$ is finite, see \cite[Cor.\,14.3]{Ueno}; it follows that $\Aut^0(B)$ is trivial.

Let $t \in B$ be a very general point. Then $\rho(t) \in (E, 0)$ is a non-torsion point, i.e., an infinite order element of the abelian group $(E, 0)$. Then, the action of $G$ on
$$q^{-1}(t)=E^{n-\kappa} \times \{t\} \simeq E^{n-\kappa}$$
is faithful and is given, for $z\in E^{n-\kappa}$, by
$$z\mapsto Az+\rho(t)b  \text{ \  with  \ } A\in U(n-\kappa,\Z) \text{ \ and \ } b\in\Z^{n-\kappa}\,.$$

If $t'\in B$ is another very general point, the intersection between $\rho(t)\Z^{n-\kappa}$ and $\rho(t')\Z^{n-\kappa}$ is trivial, i.e., equal to $\{0\}$. Therefore, we see that the identity is the only element in $G$ which belongs to $\Aut^0(X)$. Thus, we have $G_0=\{1\}$
and therefore $c_{{\rm ess}}(G, X) = c(G)$,
provided that there exists an embedding $G \hto \GL(V)$ of $G$ 
	as a unipotent subgroup for some finite dimensional vector space $V$.
Indeed, by the definition of $c_{{\rm ess}}(G, X)$,
there exists a finite index subgroup $G' \subset G$ such that 
$c_{{\rm ess}}(G, X) = c(G'/G'_0)$.
As $G_0=\{1\}$, we have $G'_0=\{1\}$. Thus 
$$c_{{\rm ess}}(G, X) = c(G'/G'_0) = c(G') = c(G),$$
where the last equality follows from Lemma~\ref{l:unipotent_property}.

It remains to find such an embedding $G \hto \GL(V)$ and show that $c(G) =  n-\kappa$.
This is now a purely group theoretical problem and we don't need to use the action on cohomology.
As $G$ acts faithfully on $q^{-1}(t)$, 
from the description of the action of $G$ on $q^{-1}(t)$
we may identify $G$ with the unipotent
affine transformation subgroup
$$P := \Z^{n-\kappa} \rtimes U({n-\kappa}, \Z)
\cong U(n -\kappa +1, \Z) \subset \GL(n -\kappa +1,\R).$$
Here, the last isomorphism is the natural one given by
$$\Z^{n-\kappa} \rtimes U({n-\kappa}, \Z) \ni f(x) = Ax + b \leftrightarrow
\begin{pmatrix}
A&b\\
0&1
\end{pmatrix} \in U(n-\kappa +1, \Z)\, .$$
It also follows that $c(G) = n-\kappa$.  This proves Proposition \ref{prop52}.
\end{proof}

\sssec{Some rationally connected examples}\label{eg:ratconn}
\hfill

Finally, we construct
examples with Kodaira dimension $-\infty$.
Let $n \ge 2$. Let $E_{\go}$ be an elliptic curve with period $\go = (-1 + \sqrt{-3})/2$ a primitive third root of unity. Let
$$\overline{X}_n := E_{\go}^{n}/\langle -\go I_n \rangle ,$$
$\pi: E_{\go}^{n} \to \overline{X}_n$ the quotient map, and $X_n \to \overline{X}_n$ the blow-up along the maximal ideals of all singular points of $\overline{X}_n$.
Then $X_n$ is a smooth projective variety and the action of $G \cnec U(n, \Z)$ on $E_{\go}^{n}$ descends to a faithful {\it biholomorphic} action on
$X_n$.
As the $G$-action on $E_{\go}^{n}$ has zero entropy,
	so do the $G$-action on $X_n$.
We verify that
$$c_\ess(G,X_n) = n-1.$$
 Furthermore,
 if $n \in \{2, 3, 4, 5\}$, then
 $X_n$ is rationally connected
 (see \emph{e.g.} \cite[Proof of Corollary \,4.6]{Og}
 and also \cite{OT}),
 and hence
 $\kappa(X_n) = -\infty$.

In view of
the above examples
we ask the following.

\begin{question}\label{ques51}
	Let $n \ge 6$.
	Can we construct an $n$-dimensional
	smooth projective variety $X$ of Kodaira dimension $-\infty$
	(e.g. a rational, or rationally connected variety) which admits
	a zero entropy subgroup $G$ such that $c_\ess(G,X) = n - 1$?
\end{question}

\small

\end{document}